\documentclass[12pt,twoside,reqno]{amsart}
\RequirePackage[english]{babel}
\usepackage[OT1]{fontenc}
\usepackage{type1cm}
\usepackage{subfigure}
\usepackage{enumerate}
\usepackage{amsthm}
\RequirePackage{amsmath,amsfonts,amssymb,amsthm}
\RequirePackage[dvips]{graphicx}
\usepackage{psfrag}
\usepackage{verbatim}
\usepackage[usenames]{color}
\usepackage[dvips]{graphicx}
\usepackage{epstopdf}
\numberwithin{equation}{section}
\usepackage[active]{srcltx}
\newtheorem{thm}{\textbf Theorem}

\newtheorem{defi}{\textbf Definition}

\newtheorem{lem}{\textbf Lemma}[section]
\newtheorem{cor}{\textbf Corollary}[section]
\newtheorem{ex}{\textbf Example}[section]
\newtheorem{que}{\textbf Question}
\newtheorem{rem}{\textbf Remark}

\theoremstyle{remark}
\DeclareSymbolFont{bbold}{U}{bbold}{m}{n}
\DeclareSymbolFontAlphabet{\mathbbold}{bbold}

\begin{document}

\title{Fractal necklaces with no cut points}

\author{\small Fan Wen}

\address{\small Fan Wen\\ College of Mathematics\\ School of Science and Engineering\\ University of Tsukuba\\ Ibaraki 305-8575, Japan\\ s1936006@s.tsukuba.ac.jp}

\bigskip





\maketitle

\begin{abstract}
The fractal necklaces in $\mathbb{R}^d$ ($d\geq 2$) introduced in this paper are a class of connected fractal sets generated by the so-called necklace IFSs, for which a lot of basic topology questions are interesting. We give two subclasses of fractal necklaces and prove that every necklace in these two classes has no cut points. Also, we prove that every stable self-similar necklace in $\mathbb{R}^2$ has no cut points, whilst an analog for self-affine necklaces is false.

\medskip

\noindent{\bf Key Words:} Fractal necklace; stability; bounded ramification; cut point; open set condition

\medskip

\noindent{\bf 2010 Mathematics Subject Classification:} Primary 52C20; Secondary 28A80

\end{abstract}

\section{Introduction}

Let $I=\{1,2,\cdots, n\}$. For each $k\in I$ let $f_k:\mathbb{R}^d\to\mathbb{R}^d$ be a contractive map satisfying $$|f_k(x)-f_k(y)|\leq c_k|x-y|$$ for all $x,y\in\mathbb{R}^d$, where $c_k\in(0,1)$.  According to Hutchinson \cite{H},
there is a unique nonempty compact subset $F$ of $\mathbb{R}^d$, called the attractor of $\{f_1,f_2,\cdots,f_n\}$, such that
\begin{equation}\label{qq1}
F=\bigcup_{k=1}^nf_k(F).
\end{equation}
We call $\{f_1,f_2,\cdots,f_n\}$ an iterated function system (IFS) of $F$.

\begin{defi}\label{de1}
An attractor $F$ with an IFS $\{f_1,f_2,\cdots,f_n\}$ on $\mathbb{R}^d$ is called a fractal necklace or a necklace for short,
if $n\geq 3$ and $f_k$'s are contractive homeomorphisms of $\mathbb{R}^d$ satisfying
\begin{equation*}
f_m(F)\cap f_k(F)=\left\{
\begin{array}{cc}
\mbox{a singleton} &\mbox{if\, $|m-k|=1$\mbox{ or }$n-1$}\\ \\
\emptyset &\mbox{if\, $2\leq|m-k|\leq n-2$}
\end{array}
\right.
\end{equation*}
for each pair of distinct digits $m, k\in I$. In this case, the ordered family $\{f_1,f_2,\cdots,f_n\}$ is called a necklace IFS or a NIFS. We say that $F$ is self-similar (self-affine), if $f_k$'s are similitudes (affine maps).
\end{defi}

Figure \ref{fig1} illustrates two planar self-similar necklaces. The first one is generated by $3$ similitudes of ratio $1/2$ and the second one is generated by $6$ similitudes of ratio $1/3$. They arise as examples of many papers for various purposes; see for example \cite{RWW,TW}. Among the results of \cite{TW}, Tyson and Wu proved that these two necklaces are of conformal dimension $1$.

\medskip

\begin{figure}[htbp]\label{fig1}
\centering
{\begin{minipage}{4.5cm}
\centering
\includegraphics[width=4.5cm]{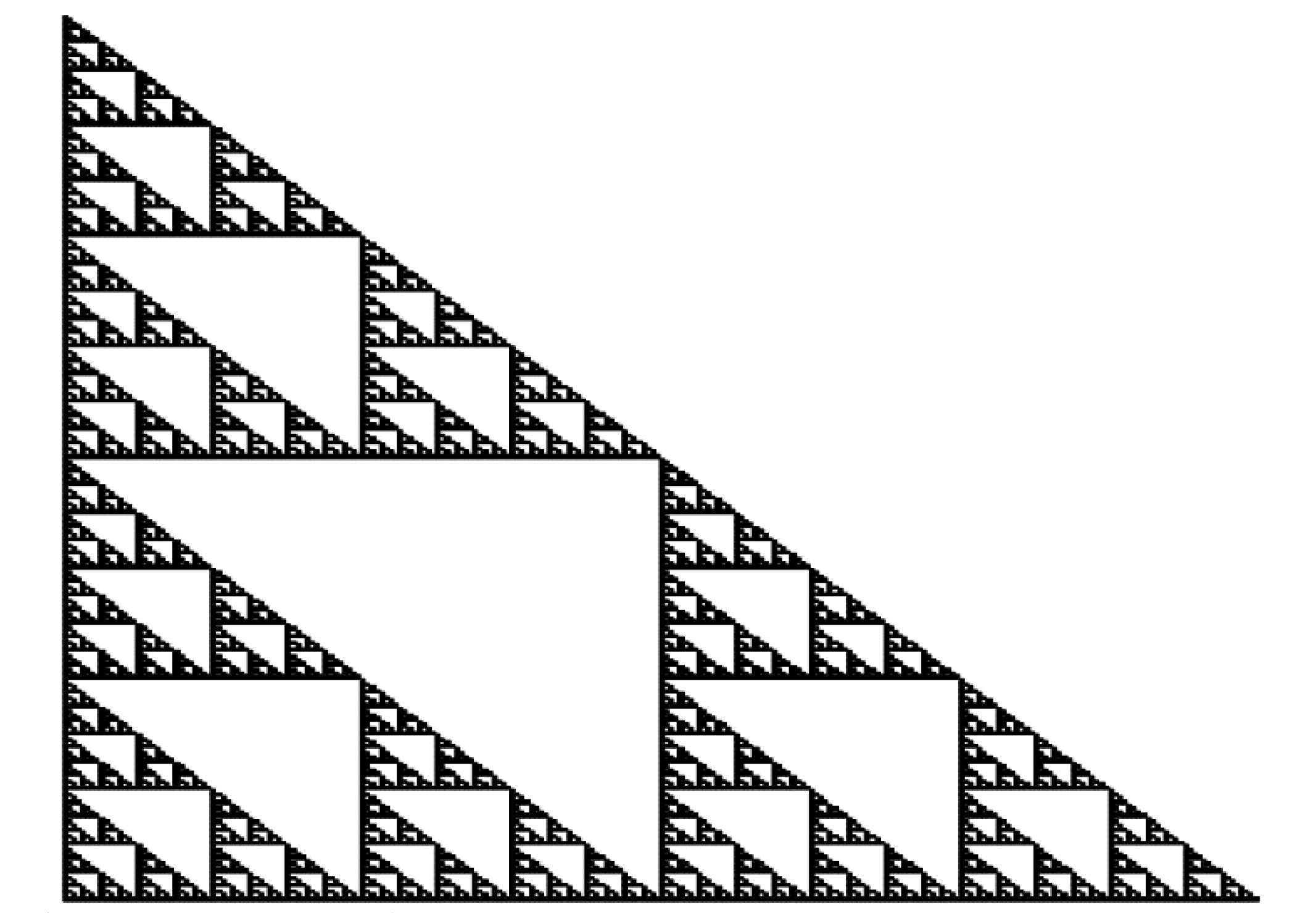}
\end{minipage}}
\quad\quad\quad
{\begin{minipage}{4.5cm}
\centering
\includegraphics[width=4.5cm]{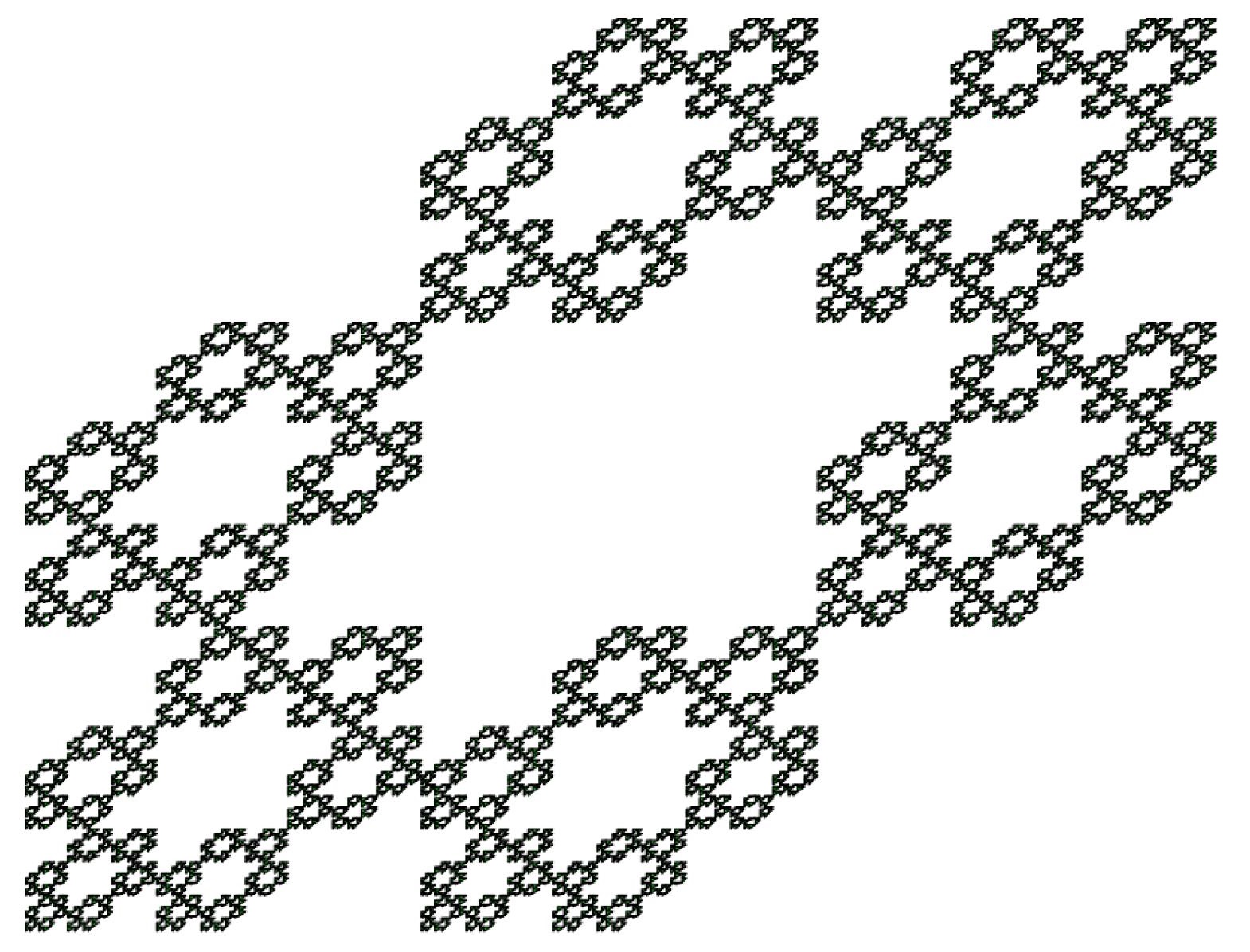}
\end{minipage}}
\caption{Two self-similar necklaces in $\mathbb{R}^2$.}
\end{figure}

It is not difficult to see that every fractal necklace is path-connected and locally path-connected; see \cite{Ha, K}. It is natural to ask whether every necklace has no cut points. The answer is no; see Section \ref{sec2}. Hereafter we say that a point $x$ of a connected topological space $X$ is a cut point, if $X\setminus\{x\}$ is not connected. The present paper is devoted to the following question. For a study on cut points of self-affine tiles we refer to \cite{AL}.

\begin{que}\label{q2}
Which necklaces in $\mathbb{R}^d$ have no cut points?
\end{que}

We start by notations. \emph{From now on denote by $F$ a necklace with a NIFS $\{f_1,f_2,\cdots,f_n\}$ on $\mathbb{R}^d$, if it is not specified.} For every integer $m\geq 0$ and every word $\sigma=i_1i_2\cdots i_m\in I^m$ we write $f_\sigma$ for $f_{i_1}\circ f_{i_2}\circ\cdots\circ f_{i_m}$ and $F_{\sigma}$ for $f_{\sigma}(F)$, where $I^0=\{\emptyset\}$ and $f_\emptyset=id$. The set $F_{\sigma}$ is called an $m$-level copy of $F$. Denote by $\mathcal{C}_m(F)$ the family of $m$-level copies of $F$ and let $$\mathcal{C}(F)=\bigcup_{m=0}^\infty \mathcal{C}_m(F).$$ A copy of $F$ always means a member of $\mathcal{C}(F)$.

For each $k\in I$ denote by $z_k$ the unique common point of the $1$-level copies $F_k$ and $F_{k+1}$. The ordered points $z_1,z_2,\cdots, z_n$ are called main nodes of $F$. For every subset $A$ of $F$ denote respectively by $\mbox{int}_FA$ and $\partial_F A$ the interior and the boundary of $A$ in the relative topology of $F$. Thus $\partial_FF_k=\{z_{k-1},z_k\}$ for every $k\in I$. Hereafter we prescribe
$$F_{n+1}=F_1\,\mbox{ and }\, z_0=z_n.$$

\begin{defi}\label{good}
We say a necklace $F$ is good, if $\partial_FF_k\not\subset F_{kj}$ for any $k,j\in I$.
\end{defi}

Equivalently, a necklace $F$ is good, if $F$ and $F_k$ are the only two copies containing $\partial_FF_k$ for each $k\in I$.

\medskip

Let $I^*=\cup_{m=0}^\infty I^m$ and let $\sigma\in I^*$. Since $f_k$'s have been assumed to be homeomorphisms of $\mathbb{R}^d$, $F_{\sigma}$ is a necklace with an induced NIFS
$$
\{f_{\sigma}\circ f_j\circ f_{\sigma}^{-1}:j\in I\}
$$
whose main nodes are $f_\sigma(z_1), f_\sigma(z_2), \cdots, f_\sigma(z_n)$. The phrase, $m$-level copies of $F_{\sigma}$, is now meaningful.


\begin{defi}\label{stab}
We say a necklace $F$ is stable, if for each $k\in I$
\begin{equation}\label{jq}
\sharp\{F_{kj}: F_{kj}\cap\partial_FF_k\neq\emptyset,\, j\in I\}\geq 2.
\end{equation}
Hereafter $\sharp$ denotes the cardinality.
\end{defi}

By the above definitions, every good necklace is stable. Additionally, every necklace $F$ with the condition
 that $z_{k-1}$ or $z_k$ is a main node of $F_k$ for each $k\in I$ is stable.

\medskip

For each $z\in F$ and for every integer $m\geq 0$ let
\begin{equation}\label{gs}
\mathcal{C}_m(F,z)=\{A\in\mathcal{C}_m(F): z\in A\}
\end{equation}
and $\mathcal{C}(F,z)=\cup_{m=0}^\infty\mathcal{C}_m(F,z)$. Let
\begin{equation}\label{dgs}
c_m(z):=c_m(F,z):=\sharp\,\mathcal{C}_m(F,z)
\end{equation}
denote the number of $m$-level copies containing $z$. Thus $c_1(z)=1$ or $2$, and $c_1(z)=2$ if and only if $z$ is a main node of $F$. Note that for each $A\in\mathcal{C}_m(F, z)$ there is one or two copies $B\in\mathcal{C}_{m+1}(F,z)$ lying in $A$. We have
$$c_m(z)\leq c_{m+1}(z)\leq 2c_m(z).$$
It then follows that $\{c_m(z)\}_{m=1}^\infty$ is a nondecreasing integer sequence satisfying $1\leq c_m(z)\leq 2^m$ for each $m\geq 1$.

\begin{defi}  We say a necklace $F$ is
\emph{of bounded ramification}, if the sequence $\{c_m(z_k)\}_{m=1}^\infty$ is bounded for each $k\in I$.
\end{defi}

Equivalently, a necklace $F$ is of bounded ramification, if $\pi^{-1}(x)$ is finite for any $x\in F$, where $\pi: I^\infty\to F$ is the code map (see \cite{Fa}). By the definition, if there is a main node $z_k$ such that it is a main node of each copy $A\in \mathcal{C}(F, z_k)$, then $F$ is not of bounded ramification. Such necklaces can be found in Figure 3 and Figure 4(b).

\medskip

The main results are as follows.

\begin{thm}\label{gothm}
Every good necklace in $\mathbb{R}^d$ has no cut points.
\end{thm}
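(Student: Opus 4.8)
\emph{The plan is as follows.} Fix $x\in F$; we must show $F\setminus\{x\}$ is connected. Suppose it is not, and write $F\setminus\{x\}=U\sqcup V$ with $U,V$ open in $F$ and nonempty. Since $U=F\setminus(V\cup\{x\})$ with $V\cup\{x\}$ closed, $\overline U\subseteq U\cup\{x\}$; symmetrically $\overline V\subseteq V\cup\{x\}$; and as $F$ is connected neither $U$ nor $V$ is closed, so $x\in\overline U\cap\overline V$, giving $\overline U\cap\overline V=\{x\}$ and $\overline U\cup\overline V=F$. I will use two elementary facts. (a) Any copy $C\in\mathcal C(F)$ with $x\notin C$ is connected and contained in $F\setminus\{x\}=U\sqcup V$, hence lies wholly in $U$ or wholly in $V$; call it a \emph{$U$-copy} or a \emph{$V$-copy}. (b) Setting $S_m:=\bigcup_{A\in\mathcal C_m(F,x)}A$, every copy forming $S_m$ contains $x$ and has diameter at most $(\max_kc_k)^m\operatorname{diam}F$, so $\operatorname{diam}S_m\to0$ and $\bigcap_{m\ge0}S_m=\{x\}$.

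The engine of the proof is the assertion that, after possibly interchanging the names $U$ and $V$, every $x$-free copy of $F$ is a $U$-copy. Granting this, any point of $F\setminus S_m$ lies in an $x$-free copy of level $m$, hence in $U$, hence not in $\overline V$; so $\overline V\subseteq S_m$ for every $m$, whence $\overline V\subseteq\bigcap_mS_m=\{x\}$, contradicting $V\ne\emptyset$ and $x\notin V$. I would prove the assertion by induction on the level $m$, the statement being ``every $x$-free copy of level $\le m$ is a $U$-copy''. For $m=1$: at most two of $F_1,\dots,F_n$ contain $x$, and two only if consecutive, so the remaining $\ge n-2\ge1$ of them form a chain of copies glued along main nodes $\ne x$; this chain is connected, hence monochromatic, and we fix the naming so that its colour is $U$. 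For the inductive step, let $C$ be an $x$-free copy of level $m+1$ with parent $A\in\mathcal C_m(F)$. If $x\notin A$, then $A$ is a $U$-copy by the inductive hypothesis and $C\subseteq A\subseteq U$. If $x\in A$, then the $x$-free copies of level $m+1$ contained in $A$ are exactly the sub-beads of $A$ avoiding $x$; at most two sub-beads (consecutive, if two) contain $x$, so the rest form a connected arc $P_A$, and it is enough to show $P_A\subseteq U$, for then $C\subseteq P_A\subseteq U$.

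This is where goodness is indispensable. Since $F$ is good, so is every copy, in particular $A$ and its parent; applying the defining inequality $\partial_F(\cdot)\not\subset(\cdot)$ inside the parent of $A$ shows that the two points of $\partial_F A$ at which $A$ meets its two neighbouring copies of level $m$ cannot both lie in one sub-bead of $A$. Each of these two gluing points also lies in the corresponding neighbouring level-$m$ copy; that copy is $x$-free (otherwise its one-point meeting with $A$ would be $x$), and so, by the inductive hypothesis, is a $U$-copy. Hence every gluing point of $A$ distinct from $x$ lies in $U$. In the typical situation one of the two gluing points is $\ne x$ and lies on an $x$-free sub-bead of $A$, i.e.\ on $P_A$; then $P_A$ meets $U$, so $P_A\subseteq U$, as required. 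The delicate situation — which I expect to be the main obstacle — is that $x$ is a main node of $A$ and both gluing points of $A$ fall on the two (adjacent) sub-beads of $A$ through $x$, leaving $P_A$ with no visible point in $U$. I would treat it by descending into one of those two sub-beads $D$: goodness of $A$, applied to the boundary pair $\partial_A D$, keeps the gluing point of $A$ lying in $D$ off the sub-bead of $D$ that contains $x$, which reproduces inside $D$ exactly the configuration of the inductive step, and one iterates. The sub-beads through $x$ produced by this descent shrink to the single point $x$, so any piece of $V$ that could be ``trapped'' along the descent is squeezed toward $x$; the crux is to deduce from this, together with $\overline U\cap\overline V=\{x\}$ and $\bigcap_mS_m=\{x\}$, that no trapped piece survives. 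This closes the induction, and with it the proof.
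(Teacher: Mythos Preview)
Your approach differs substantially from the paper's: the paper shows that a good necklace has ``Property~I'' (each $F_k$ carries an arc from $z_{k-1}$ to $z_k$ passing through at least two main nodes of $F_k$), uses this to build a dense connected union of circles $E=\bigcup_{\sigma\in I^*}f_\sigma(\gamma)\subset F$ in which each $f_\sigma(\gamma)$ meets $f_{\sigma j}(\gamma)$ in at least two points, observes that such an $E$ has no cut points, and then transfers this to $F$ via Lemma~\ref{61}. No clopen decomposition or level induction is used.

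The ``delicate situation'' in your argument is a genuine gap that the sketched descent does not close. In the sentence ``goodness of $A$, applied to $\partial_A D$, keeps the gluing point of $A$ lying in $D$ off the sub-bead of $D$ that contains $x$,'' note that $\partial_A D=\{x,q_1\}$ with $q_1\in P_A$; goodness forces $q_1$ off every $x$-sub-bead of $D$, hence $q_1\in P_D$, but $q_1$ carries the colour of $P_A$ --- which is $V$ in the bad case --- so you only learn $P_D\subseteq V$. Goodness says nothing about where the $U$-point $p_1\in\partial_P A\cap D$ lands among the sub-beads of $D$. One can follow $p_1$ through successive $x$-sub-beads until it lands in some $P_C$ (this terminates because the copies shrink and $p_1\neq x$), yielding $P_C\subseteq U$; but lifting this back to $P_A\subseteq U$ requires that each consecutive pair $P_{C_i},P_{C_{i+1}}$ in the descent share a point, and this fails whenever some intermediate step has $\partial_{C_i}C_{i+1}=\{q,q'\}$ with both $q,q'\ne x$ landing in the two $x$-sub-beads of $C_{i+1}$ --- a ``nested'' delicate configuration that goodness does not exclude (good necklaces need not be of bounded ramification; cf.\ Example~\ref{e23}). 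Your final appeal to $\bigcap_m S_m=\{x\}$ does not rescue this, since once the induction stalls at level $m$ you no longer know $V\subseteq S_{m+1}$.
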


\begin{thm}\label{mt1}
Every stable necklace of bounded ramification has no cut points.
\end{thm}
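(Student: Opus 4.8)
The plan is to isolate a self-referential ``niceness'' property of the copies of $F$, show it is inherited from subcopies by an elementary topological argument, and then force it on every copy by a descent argument in which stability and bounded ramification each enter at a single, clearly identifiable point. Throughout we may assume the main nodes $z_1,\dots,z_n$ of $F$ are distinct (automatic for $n\ge4$; for $n=3$ a non-degenerate ``bouquet'' is not of bounded ramification when stable).

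For a proper copy $A=F_{\tau k}$ put $p_A:=f_\tau(z_{k-1})$ and $q_A:=f_\tau(z_k)$; these are the two distinct contact points at which $A$ meets $\overline{F_\tau\setminus A}$, so $\{p_A,q_A\}=\partial_{F_\tau}A$. Call $A$ \emph{nice} if for every $x\in A$ each connected component of $A\setminus\{x\}$ contains $p_A$ or $q_A$. The first step is the lemma: if every $1$-level subcopy of a copy $A$ (allowing $A=F$) is nice, then $A\setminus\{x\}$ is connected for every $x\in A$; hence every proper such $A$ is itself nice, and if this holds for $A=F$ then $F$ has no cut point. One proves the lemma by decomposing $A=\bigcup_\ell A_\ell$ into its cyclic chain, treating the cases $x\in\mathrm{int}_A A_{\ell_0}$ and $x$ a main node of $A$ separately, and observing that the union of the remaining subcopies is connected and contains the contact points of the one or two subcopies that meet $x$, so niceness of those subcopies reattaches everything. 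Thus it suffices to prove that \emph{every} proper copy is nice.

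Assume not. By the contrapositive of the lemma, $F$ has a non-nice $1$-level subcopy, inside which there is a non-nice $2$-level subcopy, and so on; the crucial refinement is that this descent can be carried out \emph{coherently along a single witness point}. Precisely: if $A$ is non-nice, witnessed by $x\in A$ and a component $C$ of $A\setminus\{x\}$ with $C\cap\{p_A,q_A\}=\emptyset$, then some $1$-level subcopy $A'\ni x$ of $A$ is non-nice, witnessed by the same $x$ and by a \emph{non-degenerate} component $C'$ of $A'\setminus\{x\}$ avoiding $\{p_{A'},q_{A'}\}$ (a component of a Peano continuum minus a point is non-degenerate). If $x$ is not a main node of $A$, it lies in a unique subcopy $A'=A_{\ell_0}$; the connected ``outer arc'' $J=\bigcup_{\ell\neq\ell_0}A_\ell$ lies in one component of $A\setminus\{x\}$, and \textbf{stability is used here}: stability of $F$, inherited by $A$, forbids $\{p_A,q_A\}\subseteq A_{\ell_0}$ with $\{p_A,q_A\}$ disjoint from every other $A_\ell$, which is exactly what $J\subseteq C$ would entail; so $J$ lies in a good component and $C$ is forced to be a component of $A_{\ell_0}\setminus\{x\}$ avoiding $\{p_{A_{\ell_0}},q_{A_{\ell_0}}\}$, whence $C'=C$. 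If $x$ is a main node $v_j$ of $A$, a case analysis of the components of $A\setminus\{v_j\}$ — the ``outer'' one through $\bigcup_{\ell\neq j,j+1}A_\ell$ versus the ``side'' components in $A_j$ and $A_{j+1}$ — produces a non-degenerate witness $C'$ inside $A_j$ or $A_{j+1}$; here $C'$ may differ from $C$ but remains non-degenerate.

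Iterating gives an infinite chain $A^{(0)}\supsetneq A^{(1)}\supsetneq\cdots$ of copies, each a $1$-level subcopy of the previous, a point $x\in\bigcap_i A^{(i)}$, and non-degenerate witness components $C_i$, where $C_i$ can change only when $x$ is a main node of $A^{(i)}$. If that happened only finitely often, $C_i$ would be eventually a fixed non-degenerate set contained in every $A^{(i)}$, contradicting $\mathrm{diam}(A^{(i)})\to0$; hence $x$ is a main node of $A^{(i)}$ for infinitely many $i$. At each such $i$ the copy $A^{(i)}$ has a second $1$-level subcopy $\widetilde A^{(i)}\ni x$ besides $A^{(i+1)}$, and $\widetilde A^{(i)}$ begins an infinite nested chain of copies containing $x$, i.e.\ an element of $\pi^{-1}(x)$; these elements are pairwise distinct (they branch off the chain $(A^{(i)})$ at distinct levels), so $\pi^{-1}(x)$ is infinite — contradicting bounded ramification. \textbf{The step I expect to be the main obstacle} is the main-node case of the coherent-descent claim: one must check, across all configurations of the components of $A\setminus\{v_j\}$ (including the degenerate possibility that one of $p_A,q_A$ equals $v_j$), that a non-degenerate witness component can always be localized inside a single subcopy; by contrast, the non-main-node case and the two applications of stability and of bounded ramification are comparatively transparent.
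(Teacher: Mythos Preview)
Your argument is correct, and the main-node descent step you flag as the obstacle does go through: when $C$ is a ``side'' component it is already a witness inside $A_j$ or $A_{j+1}$; when $C$ is the outer component and hence misses $\{p_A,q_A\}$, each of $p_A,q_A$ is either $v_j$ itself or lies in a side component $S\subset A_j$ or $S\subset A_{j+1}$, and since $p_A\ne q_A$ at least one such $S$ exists and serves as $C'$ (it is a component of the relevant subcopy minus $v_j$, avoiding both contact points $v_{j-1},v_j$ or $v_j,v_{j+1}$). Stability is \emph{not} needed in this branch; it enters only in the non-main-node step, exactly where you place it. Your $n=3$ ``bouquet'' aside is a bit loose but inessential: what you actually use is $p_A\ne q_A$ for proper copies, which holds once the parent copy has two distinct boundary points.

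Your route is genuinely different from the paper's. The paper works directly rather than by descent: first a lemma (using only stability) shows that if $z\notin M_F:=\bigcup_{\sigma}f_\sigma(\{z_1,\dots,z_n\})$, i.e.\ $z$ lies in a unique $m$-level copy $V_m$ for every $m$, then the complementary union $U_m=\bigcup\{A\in\mathcal{C}_m(F):z\notin A\}$ is connected for each $m$ (inductively $U_{m+1}=U_m\cup L_m$ with $L_m$ the chain of $1$-level subcopies of $V_m$ missing $z$, and stability forces $U_m\cap L_m\ne\emptyset$), whence $F\setminus\{z\}=\bigcup_m U_m$ is connected. For $z\in M_F$, bounded ramification yields a level $k$ beyond which $z$ is no longer a main node of any containing copy; the lemma makes each such copy minus $z$ connected, and a short step-back through levels $k-1,k-2,\dots$ --- which is exactly your ``niceness propagates up'' lemma --- finishes. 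The paper's proof is shorter and avoids the case analysis of your coherent descent; your version has the compensating virtue that the two hypotheses each appear at a single, clearly labelled step, and the ``nice'' invariant packages the inductive content cleanly.
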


An attractor with an IFS $\{f_1,f_2,\cdots,f_n\}$ on $\mathbb{R}^d$ is said to satisfy the open set condition (OSC), if there is a nonempty bounded open subset $V$ of $\mathbb{R}^d$ such that $f_1(V),f_2(V),\cdots,f_n(V)$ are pairwise disjoint open subsets of $V$; see \cite{Fa,Sc}.

\begin{thm}\label{mp1}
Every stable self-similar necklace in $\mathbb{R}^d$ with the OSC has no cut points.
\end{thm}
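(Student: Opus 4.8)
The plan is to reduce Theorem~\ref{mp1} to Theorem~\ref{mt1} by showing that a stable self-similar necklace $F$ in $\mathbb{R}^d$ satisfying the OSC is automatically of bounded ramification. So the whole content is: \emph{self-similar $+$ OSC $\Rightarrow$ bounded ramification}, and then Theorem~\ref{mt1} finishes the job. (One should first check that for necklaces the OSC interacts well with the necklace structure; in particular the "singleton / empty" intersection pattern together with the OSC should give us that distinct copies $F_\sigma, F_\tau$ of the same level $m$ meet in at most a point, which is already part of the definition but now is uniformly controlled in measure.)

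First I would fix notation for the estimate. Since the $f_k$ are similitudes, write $r_k\in(0,1)$ for the contraction ratio of $f_k$ and let $r_{\min}=\min_k r_k$, $r_{\max}=\max_k r_k$. Let $s$ be the similarity dimension, i.e. the unique solution of $\sum_{k=1}^n r_k^s=1$; under the OSC, $\mathcal{H}^s(F)\in(0,\infty)$ and $\mathcal{H}^s(F_\sigma)=r_\sigma^s\,\mathcal{H}^s(F)$ for every word $\sigma$, where $r_\sigma=\prod r_{i_j}$, and distinct $m$-level copies are $\mathcal{H}^s$-almost disjoint (their intersection being a singleton, hence $\mathcal{H}^s$-null as soon as $s>0$). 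The next step is the key geometric input: there is a constant $\rho>0$, independent of $m$ and of the point, such that for every $z\in F$ and every $m\geq 1$ the $m$-level copies containing $z$ are contained in the ball $B(z,\rho\, r_{\max}^m \operatorname{diam}F)$ — indeed each such copy $F_\sigma$ with $\sigma\in I^m$ has $\operatorname{diam}F_\sigma\le r_{\max}^m\operatorname{diam}F$ and contains $z$. Combining this with Ahlfors regularity of $F$ (which follows from self-similarity $+$ OSC, e.g.\ by \cite{Fa} or a standard packing argument) gives $\mathcal{H}^s\big(\bigcup_{A\in\mathcal{C}_m(F,z)}A\big)\le \mathcal{H}^s\big(F\cap B(z,\rho r_{\max}^m\operatorname{diam}F)\big)\le C\, r_{\max}^{ms}$ for a uniform $C$. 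On the other hand, by almost-disjointness, $\mathcal{H}^s\big(\bigcup_{A\in\mathcal{C}_m(F,z)}A\big)=\sum_{A\in\mathcal{C}_m(F,z)}\mathcal{H}^s(A)\ge c_m(z)\, r_{\min}^{ms}\,\mathcal{H}^s(F)$, because every $m$-level copy has $\mathcal{H}^s$-measure at least $r_{\min}^{ms}\mathcal{H}^s(F)$. Hence
\begin{equation*}
c_m(z)\le \frac{C}{\mathcal{H}^s(F)}\left(\frac{r_{\max}}{r_{\min}}\right)^{ms}.
\end{equation*}
This bound is uniform in $z$ but \emph{grows} with $m$ unless $r_{\max}=r_{\min}$, so it is not yet enough — this is the main obstacle, and it is where I expect the real work to be.

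To get a genuinely bounded sequence one has to argue more carefully, localizing the Ahlfors-regularity estimate to the right scale. The point is that the $m$-level copies through $z$ are not merely contained in a ball of radius $\sim r_{\max}^m$; each one has diameter comparable to $r_\sigma\operatorname{diam}F$ for its own word $\sigma$, and two such copies either are nested or meet only at a point, so the collection $\mathcal{C}_m(F,z)$ behaves like a packing near $z$ by sets of controlled size. The clean way: for fixed $z$ consider the \emph{coding} $\pi^{-1}(z)\subset I^\infty$; a bound on $c_m(z)$ for all $m$ is exactly finiteness of $\pi^{-1}(z)$ together with a uniform bound on the number of codes, and for self-similar OSC attractors one shows $\sharp\,\pi^{-1}(x)\le N$ for all $x$, with $N$ depending only on the IFS, by a ball-counting argument at a single scale: the number of cylinders $F_\sigma$ of comparable size that can pass through a common point is bounded because they are $\mathcal{H}^s$-almost disjoint and each captures a definite proportion of the $\mathcal{H}^s$-mass of a fixed-radius ball around $x$. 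Carrying this out at one well-chosen scale (rather than summing over all $m$) removes the $(r_{\max}/r_{\min})^{ms}$ blow-up. Once $\sup_m c_m(z)<\infty$ for every $z$, in particular for $z=z_k$, $F$ is of bounded ramification by definition, and Theorem~\ref{mt1} applies to conclude that the stable necklace $F$ has no cut points. I would also remark, paralleling the abstract, that self-affinity alone does not force bounded ramification — so the self-similarity is used exactly in this reduction.
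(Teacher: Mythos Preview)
Your reduction to Theorem~\ref{mt1} via bounded ramification is exactly the paper's strategy, and your diagnosis of the $(r_{\max}/r_{\min})^{ms}$ blow-up together with its cure---pass to copies of comparable size and count at a single scale---is the right idea. The paper executes this without invoking $\mathcal{H}^s$ or Ahlfors regularity: given the $m$-level copies through $z_k$, it lets $F_\tau$ be the one of smallest diameter and, inside each other copy $A\in\mathcal{C}_m(F,z_k)$, descends further in the tree to a sub-copy $\widetilde A\ni z_k$ with $c_*\operatorname{diam}F_\tau<\operatorname{diam}\widetilde A\le\operatorname{diam}F_\tau$; then it uses the OSC open set $V$ directly---the images $f_{\sigma_j}(V)$ are pairwise disjoint and all sit in a ball of radius $H\operatorname{diam}F_\tau$, so a Lebesgue-volume comparison in $\mathbb{R}^d$ gives $c_m(z_k)\le w_d(H\operatorname{diam}F)^d/(c_*^d\operatorname{Vol}V)$, independent of $m$. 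This is a shade cleaner than your route, since it uses only the set $V$ handed to you by hypothesis rather than the measure-theoretic consequences of the OSC, but the content is the same.
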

Actually, we shall show that every self-similar necklace in $\mathbb{R}^d$ with the OSC is of bounded ramification, which together with Theorem \ref{mt1} implies Theorem \ref{mp1}.

\medskip

As a corollary of a theorem of Bandt and Rao \cite{BR}, every self-similar necklace in $\mathbb{R}^2$ satisfies the OSC.
Thus, Theorem \ref{mp1} gives the following corollary.

\begin{cor}\label{mt2}
Every stable self-similar necklace in $\mathbb{R}^2$ has no cut points.
\end{cor}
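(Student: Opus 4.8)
The plan is to derive Corollary \ref{mt2} directly from Theorem \ref{mp1} by showing that, in the plane, the open set condition costs nothing: every self-similar necklace $F\subset\mathbb{R}^2$ already satisfies it. The planarity enters only through the separation theorem of Bandt and Rao \cite{BR}, which yields the OSC for a self-similar set in $\mathbb{R}^2$ that is connected and whose pieces meet pairwise in at most a finite set. So the first step is to list the hypotheses of that theorem and verify them for $F$.

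For this, recall first that $F$ is a Peano continuum, in particular connected (see the introduction). Second, by Definition \ref{de1} we have $n\geq 3$ and $f_m(F)\cap f_k(F)$ is a singleton when $|m-k|\in\{1,n-1\}$ and is empty otherwise, so the pieces $f_1(F),\dots,f_n(F)$ meet pairwise in at most one point; in particular $\bigcup_{m\neq k}\bigl(f_m(F)\cap f_k(F)\bigr)$ is finite. Third, $F$ is not contained in a line: otherwise $F$, being a compact connected subset of a line, is a closed segment and each $F_k$ a closed subsegment, so two of these subsegments intersect exactly when they are spatially adjacent; the necklace relations would then force the adjacency graph of $F_1,\dots,F_n$ to be both a path on $n$ vertices (hence a tree) and an $n$-cycle, which is impossible for $n\geq 3$. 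With these three facts, \cite{BR} furnishes a nonempty bounded open set $V\subset\mathbb{R}^2$ with $f_1(V),\dots,f_n(V)$ pairwise disjoint subsets of $V$; that is, $F$ satisfies the OSC.

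It then remains only to invoke Theorem \ref{mp1} with $d=2$: $F$ is a stable self-similar necklace in $\mathbb{R}^2$ satisfying the OSC, hence it has no cut points, which is the assertion of the corollary. (As noted after Theorem \ref{mp1}, the substance there is that such an $F$ is automatically of bounded ramification, after which Theorem \ref{mt1} applies.) I do not anticipate a real obstacle here: the corollary is a short reduction once Theorem \ref{mp1} is in hand, and the only delicate point is matching the precise hypotheses of the Bandt--Rao separation theorem to the defining structure of a necklace --- in particular ruling out whatever degenerate case that theorem excludes, which for necklaces is disposed of by the elementary combinatorial argument above.
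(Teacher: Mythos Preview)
Your proposal is correct and follows exactly the route the paper takes: invoke the Bandt--Rao separation theorem \cite{BR} to conclude that any self-similar necklace in $\mathbb{R}^2$ satisfies the OSC, and then apply Theorem~\ref{mp1}. The paper states this in one line without spelling out the verification of the Bandt--Rao hypotheses, whereas you supply those details (connectedness, pairwise finite intersections, and non-collinearity), but the argument is the same.
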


\begin{rem}{\rm
A self-similar necklace of bounded ramification in $\mathbb{R}^2$ may have cut points; see Example \ref{e21}.}
\end{rem}

\begin{rem}{\rm
A stable self-affine necklace in $\mathbb{R}^2$ may have cut points; see Example \ref{e22}.}
\end{rem}

\begin{rem}{\rm
Stable necklaces of bounded ramification and good necklaces are not mutually inclusive; see Example \ref{e23}.}
\end{rem}

Without assuming $F$ is self-similar, we have the following result.

\begin{thm}\label{mp2}
Every planar necklace with no cut points satisfies the OSC.
\end{thm}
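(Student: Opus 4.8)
The plan is to split along whether $\mathrm{int}_{\mathbb{R}^2}F$ is empty. If $\mathrm{int}_{\mathbb{R}^2}F\neq\emptyset$ the proof is immediate: take $V=\mathrm{int}_{\mathbb{R}^2}F$; since each $f_k$ is a homeomorphism of $\mathbb{R}^2$ we get $f_k(V)=\mathrm{int}_{\mathbb{R}^2}F_k\subseteq\mathrm{int}_{\mathbb{R}^2}F=V$, and as $F_j\cap F_k$ has at most one point for $j\neq k$, the sets $\mathrm{int}_{\mathbb{R}^2}F_k$ are pairwise disjoint, which is the OSC. So the content is the case $\mathrm{int}_{\mathbb{R}^2}F=\emptyset$. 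Here I would first note that $F$ separates the plane: being a nondegenerate Peano continuum without cut points it is cyclically connected, hence contains a simple closed curve $\gamma\subseteq F$; the inside of $\gamma$ cannot lie in $F$ (else $\mathrm{int}_{\mathbb{R}^2}F\neq\emptyset$), so a point of $(\mathrm{inside}\,\gamma)\setminus F$ lies in a complementary domain of $F$ that, being disjoint from $\gamma$, is bounded.

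Let $\Omega_\infty$ be the unbounded complementary domain of $F$ and $J=\partial\Omega_\infty$. By a classical result of plane topology, the boundary of a complementary domain of a Peano continuum without cut points is a simple closed curve, so $J$ is a Jordan curve contained in $F$; write $\widehat F=\mathbb{R}^2\setminus\Omega_\infty$ (the closed ``inside'' of $J$). I would take $V=\mathrm{int}_{\mathbb{R}^2}\widehat F$, which is nonempty, bounded, open, and satisfies $\overline V=\widehat F\supseteq F$. The inclusion $f_k(V)\subseteq V$ is then soft: extending $f_k$ to a homeomorphism of $S^2$ fixing $\infty$, the set $f_k^{-1}(\Omega_\infty)$ is connected, unbounded, and disjoint from $F$ (because $f_k(F)=F_k\subseteq F$ misses $\Omega_\infty$), hence $f_k^{-1}(\Omega_\infty)\subseteq\Omega_\infty$, i.e.\ $f_k(\widehat F)\subseteq\widehat F$; since $f_k(V)$ is open and $V=\mathrm{int}\,\widehat F$, this gives $f_k(V)\subseteq V$. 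The same reasoning inside each copy identifies $f_k(J)=:J_k$ with the outer Jordan curve $\partial\Omega_\infty(F_k)$ of $F_k$ and $f_k(\widehat F)=\widehat{F_k}:=\mathbb{R}^2\setminus\Omega_\infty(F_k)$.

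The heart of the matter, and the step I expect to be the main obstacle, is to show that no bead ``wraps around'' the rest, i.e.\ $F\cap\widehat{F_k}=F_k$ for each $k$. I would first prove a combinatorial lemma on the cyclic intersection pattern: if $\Gamma\subseteq F$ is a Jordan curve, then either $\Gamma\subseteq F_a$ for a single $a$, or $\Gamma$ meets every $F_k$ and runs through all main nodes $z_1,\dots,z_n$ in cyclic order. The reason is that a point of $\Gamma$ in two distinct beads must be a main node, and $\Gamma$ being a simple closed curve forces $\Gamma\cap F_k$ (when it is not all of $\Gamma$) to be a single subarc with endpoints in $\{z_{k-1},z_k\}$; hence the beads traversed by $\Gamma$ form a subpath of the $n$-cycle that closes up, so a single bead or all $n$. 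Applying this to $J$: if $J\subseteq F_a$, then (since $\partial\Omega_\infty(F_a)=J$ too) I would descend through nested subcopies $F_a\supseteq F_{aj_1}\supseteq\cdots$, each still containing $J$, until the lemma forces $J$ to traverse all sub-beads of some copy $C=F_\sigma$ with $\sigma\neq\emptyset$; then $C$ and $F$ have the same outer curve $J$, so $\widehat C=\widehat F\subseteq\mathrm{conv}(J)$ and $\mathrm{diam}\,C=\mathrm{diam}\,F$ (both sandwiched between $\mathrm{diam}\,J$ and $\mathrm{diam}\,\mathrm{conv}(J)=\mathrm{diam}\,J$), contradicting that $f_\sigma$ is a strict contraction. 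Hence $J$ traverses all beads; and if some $F_k$ enclosed a bead $F_j$, then by connectedness of the chain $F\setminus F_k$ it would enclose all of $F\setminus F_k$, forcing $J=J_k\subseteq F_k$, again impossible. This yields $F\cap\widehat{F_k}=F_k$, hence $F_j\cap\widehat{F_k}\subseteq F_j\cap F_k$ for $j\neq k$.

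Finally I would deduce pairwise disjointness of $f_1(V),\dots,f_n(V)$. For $j\neq k$ these are Jordan domains with boundaries $J_j\subseteq F_j$ and $J_k\subseteq F_k$, so $\partial f_j(V)\cap\partial f_k(V)\subseteq F_j\cap F_k$ is empty or a single main node. If $f_j(V)\cap f_k(V)$ were nonempty with neither domain containing the other, then $\partial(f_j(V)\cap f_k(V))$ would be an infinite set contained in $J_j\cup J_k$, with its $J_j$-part inside $\overline{f_k(V)}=\widehat{F_k}$ and its $J_k$-part inside $\widehat{F_j}$; but by the no-wrapping fact $J_j\cap\widehat{F_k}\subseteq F_j\cap F_k$ and $J_k\cap\widehat{F_j}\subseteq F_j\cap F_k$ each have at most one point, a contradiction; and one domain containing the other would mean one bead's outer curve encloses the other bead, which no-wrapping forbids. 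Hence $f_j(V)\cap f_k(V)=\emptyset$, and $V$ witnesses the OSC. The delicate ingredients are thus the combinatorial lemma on Jordan curves in $F$ and the descent/diameter argument excluding a self-enclosing bead; the remaining plane-topology inputs are classical.
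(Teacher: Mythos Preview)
Your proposal is correct, but it follows a genuinely different route from the paper's own proof.

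The paper never considers the filled hull $\widehat F$ or its boundary curve $J$. Instead, it takes a \emph{bounded} complementary domain $U$ of $F$ and proves inductively that each $f_\sigma(U)$ remains a bounded component of $\mathbb R^2\setminus F$; the open set witnessing the OSC is then the forward orbit $V=\bigcup_{\sigma\in I^*}f_\sigma(U)$. The only delicate step is showing $f_k(U)\cap F=\emptyset$, and the paper does this by a short direct argument: if $f_1(U)$ met, say, $F_2$, then since $\partial f_1(U)\subset F_1$ and $F_2\cap F_1=\{z_1\}$, the connected set $F_2$ could only cross $\partial f_1(U)$ at $z_1$, which (using that $F$ has no cut points, so $F_2\setminus\{z_1\}$ stays connected) forces a contradiction.

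Your approach trades that elementary crossing argument for more classical plane topology: you invoke the Torhorst--Whyburn theorem that the boundary of each complementary domain of a planar Peano continuum without cut points is a simple closed curve, and then run a combinatorial lemma on how a Jordan curve in $F$ must meet the beads, together with a diameter/contraction argument to exclude self-enclosure. This gives the more canonical witness $V=\mathrm{int}\,\widehat F$ (a single Jordan domain rather than a countable union of holes), at the cost of the heavier cited input; the paper's argument needs only the existence of \emph{some} bounded complementary domain and the Jordan curve theorem. Incidentally, your descent step is slightly redundant: once $J\subseteq F_a$ gives $\widehat{F_a}=\widehat F$, the contraction bound $\mathrm{diam}\,\widehat{F_a}\le c_a\,\mathrm{diam}\,\widehat F$ already yields the contradiction at the first level, without iterating. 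Also, the case $\mathrm{int}_{\mathbb R^2}F\neq\emptyset$ is in fact vacuous here, since the paper observes elsewhere that every necklace has topological dimension $1$.
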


The paper is organized as follows. In Section \ref{sec2}, we give examples of necklaces to show Remarks 1, 2 and 3. Then we prove Theorem \ref{gothm} in Section \ref{sec3}, Theorem \ref{mt1} in Section \ref{sec4}, and Theorems \ref{mp1} and \ref{mp2} in Section \ref{sec5}. In the light of our results we put some further questions in Section \ref{sec6}.

\section{Examples}\label{sec2}

We first show by an example that a planar self-similar necklace of bounded ramification may have cut points.

\begin{ex}\label{e21}
{\rm We use the complex number notation. Let $\{f_1, \cdots, f_{24}\}$ be a NIFS on the complex plane $\mathbb{C}$ defined by
\begin{equation*}
f_j(z)=\left\{
\begin{array}{cc}
\frac{z}{3}+a_j  &\mbox{if \,} j\in\{1, 7, 13, 19\}\\ \\
\frac{z}{15}+a_j &\mbox{if\, }j\in\{1,2,\cdots, 24\}\setminus\{1, 7, 13, 19\},
\end{array}
\right.
\end{equation*}
where $a_1, a_2, \cdots, a_{24}\in\mathbb{C}$ satisfy
\begin{eqnarray*}
&& f_{24}(1)=f_1\circ f_{13}(i),\,\,f_1\circ f_{13}(1)=f_2(i)\\
&& f_6(1+i)=f_7\circ f_{19}(0),\,\, f_7\circ f_{19}(1+i)=f_8(0)\\
&& f_{12}(i)=f_{13}\circ f_1(1),\,\, f_{13}\circ f_1(i)=f_{14}(1)\\
&& f_{18}(0)=f_{19}\circ f_7(1+i),\,\, f_{19}\circ f_7(0)=f_{20}(1+i)\\
&& f_j(1+i)=f_{j+1}(0),\,\, j\in\{2,4,9,11\}\\
&& f_j(1)=f_{j+1}(i),\,\, j\in\{3,5,20,22\} \\
&& f_j(i)=f_{j+1}(1),\,\, j\in\{8,10,15,17\}\\
&& f_j(0)=f_{j+1}(1+i),\,\, j\in\{14,16,21,23\}.
\end{eqnarray*}
The planar self-similar necklace $F$ generated by $\{f_1, f_2, f_3, \cdots, f_{24}\}$ is illustrated in Figure 2. It has the following properties.
\begin{figure}[htbp]\label{fig2}
\centering
{\begin{minipage}{12cm}
\includegraphics[width=12cm]{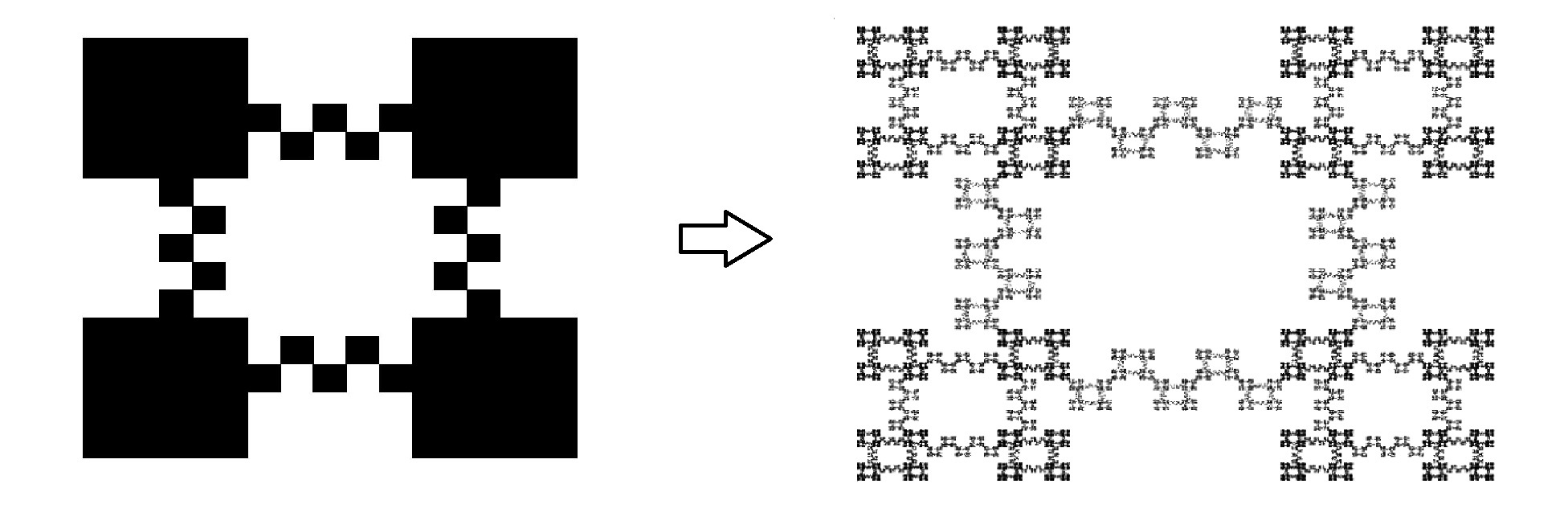}
\end{minipage}}
\caption{A planar self-similar necklace of bounded ramification and with cut points}
\end{figure}

(1) $F$ is not stable, in fact, for the $1$-level copy $F_1$ one has $$\{F_{1j}: F_{1j}\cap\partial_FF_1\neq\emptyset, j\in\{1,2,\cdots, 24\}\}=\{F_{1(13)}\},$$
so $F$ does not satisfy (\ref{jq}). Here the bracketed number in the subscript emphasises that it is a digit.

(2) $F$ is of bounded ramification, indeed, given a main node $z_k$ and an integer $m\geq 1$, $F$ has only two $m$-level copies containing $z_k$.

(3) $(1+i)/4$ is a cut point of $F$. In fact, $F\setminus F_{1(13)}$ is obviously not connected. By zooming we see that $$F\setminus F_{1(13)},\,\,F\setminus F_{1(13)1(13)},\,\, F\setminus F_{1(13)1(13)1(13)},\,\, \cdots,$$ are not connected and tend  to $F\setminus\{(1+i)/4\}$ increasingly, by which one easily shows that $F\setminus\{(1+i)/4\}$ is not connected, as desired.}
\end{ex}

Next we give an example of stable planar self-affine necklaces with cut points.

\begin{ex}\label{e22}
{\rm Let $T_0$ and $T_1$ be two closed solid isosceles triangles sharing a common vertex $z_0$ and of different sizes, whose angles at $z_0$ are a pair of vertical angles and whose opposite sides are parallel. Let $T=T_0\cup T_1$. Let $V$ be the set of the four extremal points of $T$. Let $\{f_1,f_2,\cdots, f_6\}$ be a family of invertible contractive affine maps of $\mathbb{R}^2$ satisfying the following conditions:

1) $V\subset \cup_{k=1}^6f_k(T)\subset T$.

2) $\sharp ( f_j(T)\cap V)=1$ for each $j\in\{1,2,5,6\}$.

3) $V\cap(f_3(T)\cup f_4(T))=\emptyset$.

4) $f_k(T)\cap f_{m}(T)=f_k(V)\cap f_{m}(V)$ if $k\neq m$ and $\{k,m\}\neq\{1,6\}$.

5)$f_1(T)\cap f_6(T)=\{z_0\}$ and $f_1(z_0)=f_6(z_0)=z_0$.

6) $\sharp\, (f_j(T)\cap f_{j+1}(T))=1$ for $j\in\{1,2,3,4,5\}$.

7) $f_k(T)\cap f_{m}(T)=\emptyset$ if $|m-k|\geq 2$.

\noindent Then $\{f_1,f_2,\cdots, f_6\}$ is a NIFS which generates a self-affine necklace $F$ in $\mathbb{R}^2$.  The first step construction of $F$ is illustrated in Figure 3, where the shadow part consists of $f_1(T), f_2(T),\cdots, f_6(T)$. Their connecting points are main nodes of $F$. $z_0$ is a main node and a cut point of $F$.

\begin{figure}[htbp]\label{fig3}
\centering
{\begin{minipage}{10cm}
\centering
\includegraphics[width=10cm]{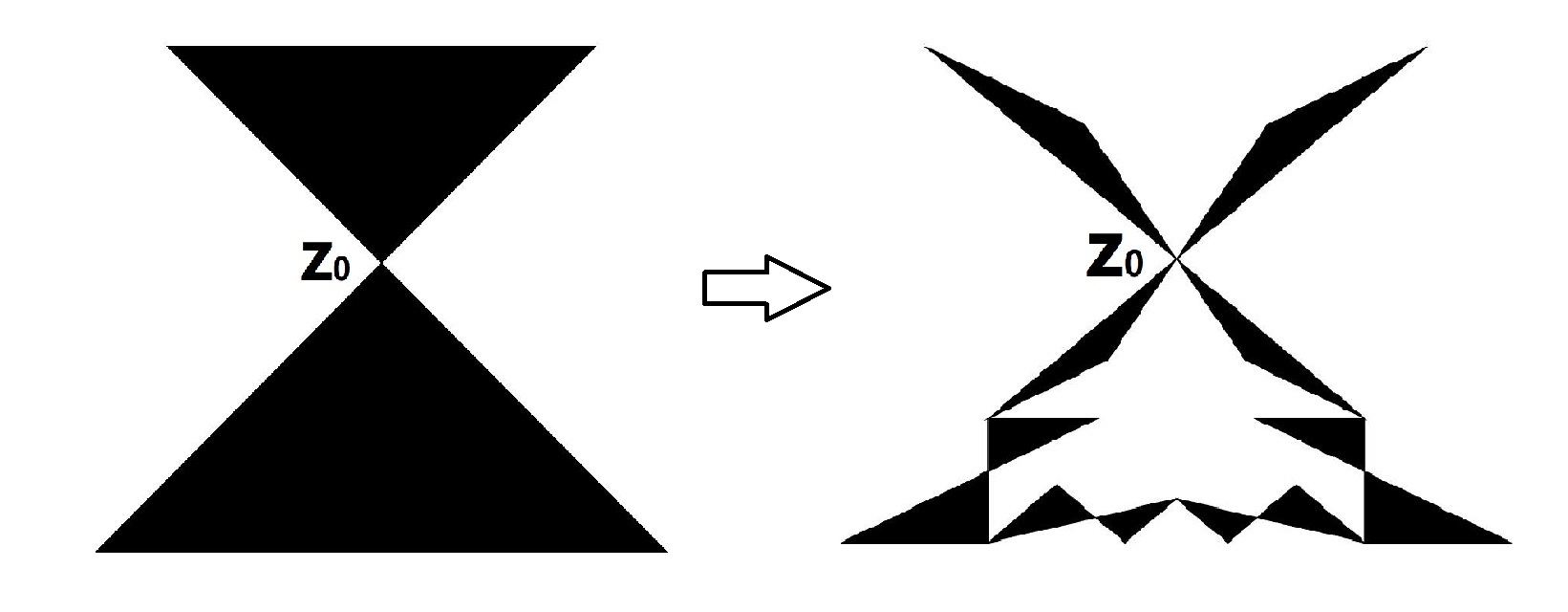}\\
\end{minipage}}
\caption{The first step construction of a stable planar self-affine necklace with cut points.}
\end{figure}

The necklace $F$ is not of bounded ramification, in fact, by condition 5) we have
$$
\mathcal{C}_m(F, z_0)=\{F_{i_1i_2\cdots i_m}: i_1i_2\cdots i_m\in\{1,6\}^m\}\mbox{\, and\, } c_m(z_0)=2^m
$$
for each integer $m\geq 0$, so $\{c_m(z_0)\}_{m=0}^\infty$ is unbounded.

\medskip

The necklace $F$ is stable. In fact, given $k\in\{1,6\}$, we have $z_0\in (\partial_FF_k)\cap F_{k1}\cap F_{k6}$, so
$$
\sharp\{F_{kj}: F_{kj}\cap\partial_FF_k\neq\emptyset,\, j\in \{1,2,\cdots, 6\}\}\geq 2.
$$
On the other hand, given $k\in\{2,3,4,5\}$, we have $\partial_FF_k\subset f_k(V)$ and $\sharp\,(f_k(V)\cap F_{kj})\leq 1$ for each $j\in\{1,2,3,4,5,6\}$, so $\sharp\,((\partial_F F_k)\cap F_{kj})\leq 1$, and so
$$
\sharp\{F_{kj}: F_{kj}\cap\partial_FF_k\neq\emptyset,\, j\in \{1,2,\cdots, 6\}\}\geq 2.
$$
This proves that $F$ is stable.}
\end{ex}

Finally, we show by examples that stable necklaces of bounded ramification and good necklaces are not mutually inclusive.

\begin{ex}\label{e23}
{\rm Let us see Figure 4. The left one is a planar self-similar necklace generated by $\{f_1, f_2, f_3\}$, where $$f_1(z)=\frac{e^{\pi i/6}\overline{z}}{\sqrt{3}},\, f_2(z)=\frac{z+1}{3},\, f_3(z)=\frac{e^{5\pi i/6}z}{\sqrt{3}}+\frac{2}{3},\, z\in\mathbb{C}.$$
Let $F$ be this necklace and let $I=\{1,2,3\}$. Then for each $k\in I$
$$
\sharp\{F_{kj}: F_{kj}\cap\partial_FF_k\neq\emptyset,\, j\in I\}=2,
$$
so $F$ is stable. On the other hand, it is of bounded ramification because
$$c_m(z_1)=c_m(z_2)=3\mbox{\, and \,}c_m(z_3)=2$$
for each integer $m\geq 1$, where $z_1,z_2,z_3$ are main nodes of $F$. In addition, noticing that $F_{13}\supset\partial_F F_1$, we conclude that $F$ is not good.

\begin{figure}[htbp]
\subfigure[]
{\begin{minipage}{5.6cm}
\centering
\includegraphics[width=5.6cm]{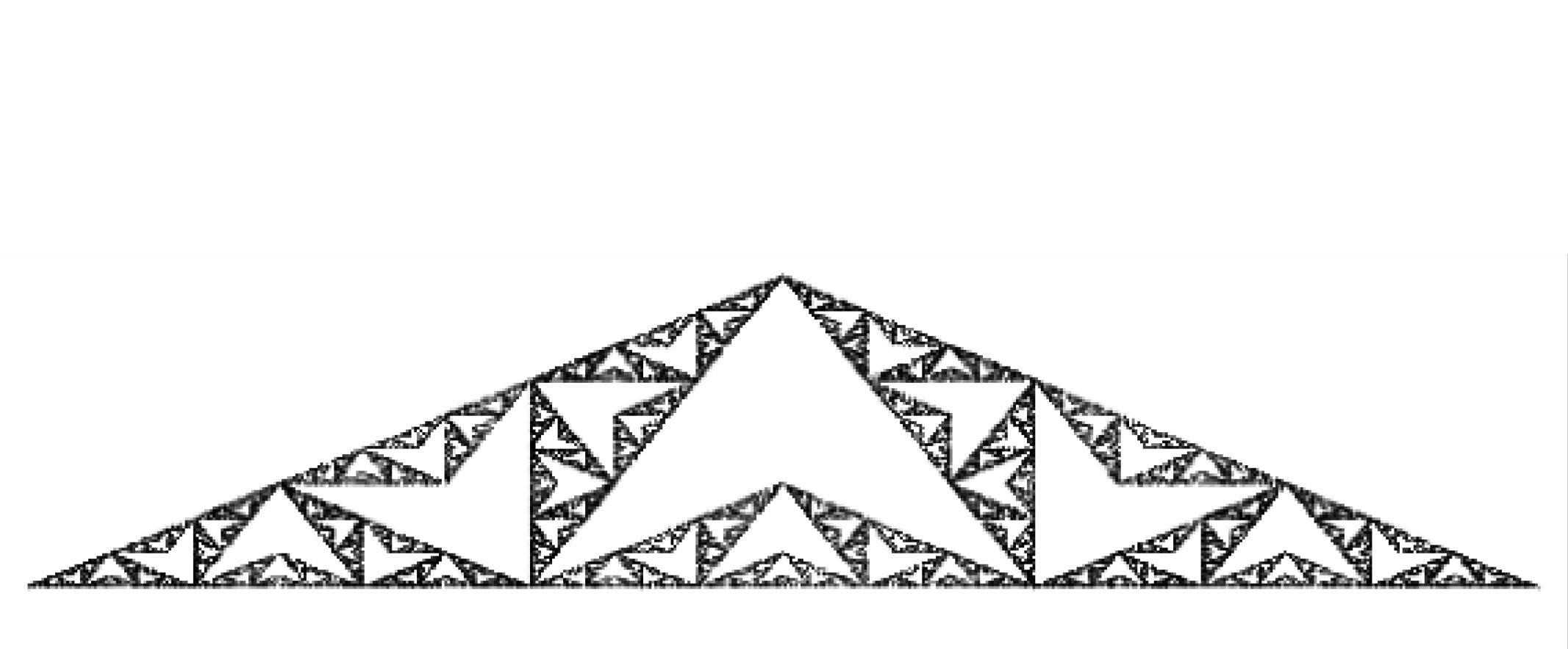}
\end{minipage}}
\quad
\subfigure[]
{\begin{minipage}{5.6cm}
\centering
\includegraphics[width=5.6cm]{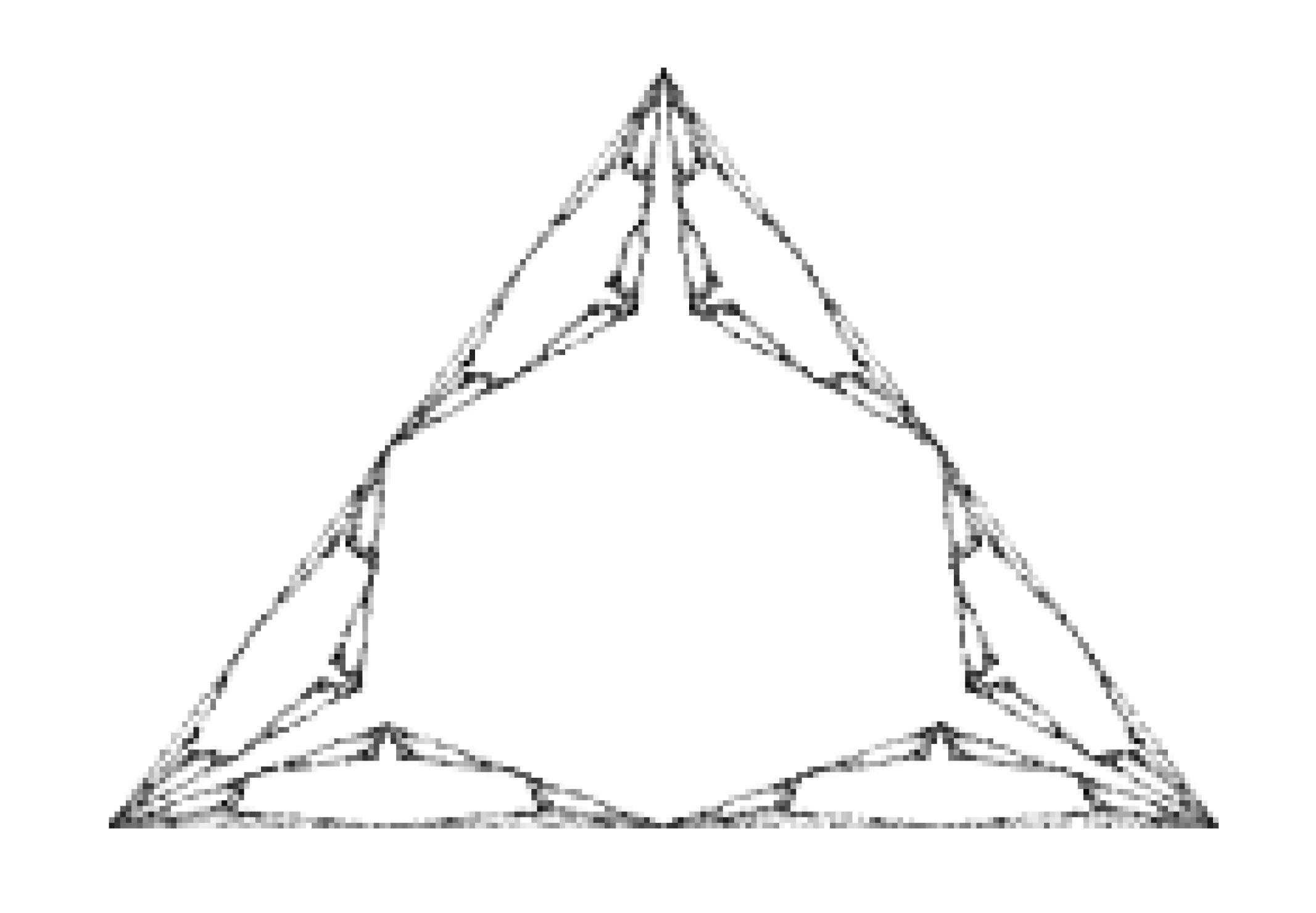}
\end{minipage}}
\caption{Stable necklaces of bounded ramification and good necklaces are not mutually inclusive.}
\end{figure}

Next let $0<\alpha<{\sqrt{3}}/{6}$ be given. For $z=x+iy\in\mathbb{C}$ let
$$g_1(z)=\frac{x+i\alpha y}{2},\,g_2(z)=g_1(z)+\frac{1}{2},$$
$$g_3(z)=e^{\frac{2\pi i}{3}}g_1(z)+1,\,\,g_4(z)=e^{\frac{2\pi i}{3}}g_1(z)+\frac{3+i\sqrt{3}}{4}$$
$$g_5(z)=e^{\frac{\pi i}{3}}\overline{g_1(z)}+\frac{1+i\sqrt{3}}{4},\, \, g_6(z)=e^{\frac{\pi i}{3}}\overline{g_1(z)}.$$
Then $\{g_1,g_2,\cdots,g_6\}$ is a NIFS which generates is a self-affine necklace. It is illustrated on the right of Figure 4.
For each $k\in \{1,2,\cdots, 6\}$ we easily see that $F$ and $F_k$ are the only two copies containing $\partial_FF_k$ and that
$$c_m(z_k)=2^m$$ for each $m\geq 1$. Thus $F$ is good but not of bounded ramification.}
\end{ex}

\section{The Proof of Theorem \ref{gothm}}\label{sec3}

In this section we prove Theorem \ref{gothm}: Every good necklaces has no cut points. The following lemma will be used.

\begin{lem}\label{61}
Let $X$ be a connected metric space, $E$ be connected and dense in $X$, and $x\in X$. If $x$ is a cut point of $X$, then $x$ belongs to $E$ and is a cut point of $E$.
\end{lem}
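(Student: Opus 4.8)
The plan is a purely topological argument that uses only the fact that, in a metric space, singletons are closed. Since $X$ is a metric space, $\{x\}$ is closed, so $X\setminus\{x\}$ is open in $X$; as $x$ is a cut point of $X$, we may write $X\setminus\{x\}=U\cup V$ with $U,V$ nonempty, disjoint, and open in $X$.

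First I would prove $x\in E$. Suppose not; then $E\subseteq U\cup V$, and since $E$ is connected while $U,V$ are disjoint open sets, $E$ lies entirely in one of them, say $E\subseteq U$. Then $X=\overline{E}\subseteq\overline{U}$. But $V$ is a nonempty open set disjoint from $U$, so no point of $V$ can lie in $\overline{U}$, contradicting $\overline{U}=X$. Hence $x\in E$.

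Next I would show that both $E\cap U$ and $E\cap V$ are nonempty. If, say, $E\cap U=\emptyset$, then $E\subseteq V\cup\{x\}$, so $X=\overline{E}\subseteq\overline{V}\cup\{x\}$, using here that $\{x\}$ is closed. Picking any $u\in U$, the set $U$ is a neighbourhood of $u$ disjoint from $V$, so $u\notin\overline{V}$; and $u\neq x$ since $x\notin U$. Thus $u\notin\overline{V}\cup\{x\}=X$, a contradiction. Therefore $E\cap U\neq\emptyset$, and symmetrically $E\cap V\neq\emptyset$.

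Finally, since $x\notin U$ and $x\notin V$, the sets $E\cap U$ and $E\cap V$ are relatively open in $E\setminus\{x\}$, disjoint, nonempty, and together cover $E\setminus\{x\}$; hence $E\setminus\{x\}$ is disconnected, i.e. $x$ is a cut point of $E$. I do not expect a genuine obstacle here: the only points demanding care are bookkeeping about whether openness is taken in $X$ or in the subspace $E$, and invoking the metric hypothesis exactly where it is used, namely to guarantee that $\{x\}$ is closed so that the two density arguments go through.
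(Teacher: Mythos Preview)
Your argument is correct. It differs in shape from the paper's proof, though both are elementary point-set topology. The paper argues the contrapositive in one stroke: if $E\setminus\{x\}$ is connected, then (using that $X$ is connected with more than one point, so $x$ is not isolated) $E\setminus\{x\}$ is still dense in $X$, and since $E\setminus\{x\}\subseteq X\setminus\{x\}\subseteq X=\overline{E\setminus\{x\}}$, the standard ``a set between a connected set and its closure is connected'' fact gives that $X\setminus\{x\}$ is connected. This handles both conclusions ($x\in E$ and $x$ a cut point of $E$) simultaneously, since the negation of either makes $E\setminus\{x\}=E$ or otherwise connected.

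Your route goes forward instead of backward: you start from an explicit open separation $U\cup V$ of $X\setminus\{x\}$ and push it down to $E$. This is slightly longer but more concrete, and it makes the role of the metric hypothesis (closedness of $\{x\}$, hence $U,V$ open in $X$) completely explicit, whereas the paper's version leaves the reader to check why $E\setminus\{x\}$ remains dense. Either approach is perfectly adequate here.
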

\begin{proof} Suppose $E\setminus\{x\}$ is connected. Since $E$ is dense in $X$, $E\setminus\{x\}$ is so. It then follows from $E\setminus\{x\}\subset X\setminus\{x\}\subset X$ that $X\setminus \{x\}$ is connected, contradicting the assumption that $x$ is a cut point of $X$. Thus $x$ belongs to $E$ and is a cut point of $E$.
\end{proof}

\medskip

As prescribed, $F$ is a necklace with a NIFS $\{f_1,f_2,\cdots,f_n\}$ on $\mathbb{R}^d$. Let $x, u\in F$ and let $k$ be a positive integer. We say that a finite sequence $(A_1,A_2,\cdots, A_N)$ of $k$-level copies of $F$ is a $k$-level chain from $x$ to $u$, if
$$x\in A_1\setminus \bigcup_{j=2}^NA_j, \,u\in A_N\setminus \bigcup_{j=1}^{N-1}A_j,$$
and
\begin{equation*}
A_j\cap A_m=\left\{
\begin{array}{cc}
\mbox{a singleton} &\mbox{if\, $|j-m|=1$}\\ \\
\emptyset &\mbox{if\, $|j-m|\geq 2$}
\end{array}
\right.\quad j,m\in\{1,2,\cdots, N\}.
\end{equation*}
In this case, we also say that $\cup_{j=1}^NA_j$ is a $k$-level chain. By convention we prescribe $\cup_{j=2}^NA_j=\cup_{j=1}^{N-1}A_j=\emptyset$, if $N=1$.

\medskip

Let $(A_1,A_2,\cdots, A_N)$ be a $k$-level chain of $F$. Denote by $x_j$ the unique point of $A_j\cap A_{j+1}$ for each $j\in\{1,2,\cdots, N-1\}$. We call the ordered points $x_1,x_2,\cdots, x_{N-1}$  the connections of the chain.

\medskip

By Lemma \ref{61}, to prove a topological space has no cut points, it suffices to show that it has a connected dense subset with no cut points. We shall show that every good necklace has Property I and that every necklace with Property I has a connected dense subset with no cut points.
Here we say that a necklace $F$ has Property I, if each of its $1$-level copies $F_k$ has an arc from $z_{k-1}$ to $z_k$ which passes through at least two main nodes of $F_k$. By convention an arc means a subset homeomorphic with the unit interval $[0,1]$.

\medskip

The necklace $F$ in Example \ref{e21} does not have Property I, indeed, $F_1$ does not have a wanted arc with Property I. On the other hand, there are necklaces with  Property I, but they are not good. The left necklace in Figure 4 is one of such.

\medskip

Now we have made the preparations to prove Theorem 1.

\medskip

\noindent{\bf The proof of Theorem \ref{gothm}.} Let $x, u\in F$.

\medskip

{\bf Claim 1.} For each $k\geq 1$, $F$ has a $k$-level chain $\Gamma_k$ from $x$ to $u$. They satisfy $\Gamma_{k+1}\subset\Gamma_k$ and
$$
\{x_1^{(k)}, x_2^{(k)}, \cdots, x_{N_k}^{(k)}\}\subseteq\{x_1^{(k+1)}, x_2^{(k+1)}, \cdots, x_{N_{k+1}}^{(k+1)}\},
$$
where $x_1^{(k)}, x_2^{(k)}, \cdots, x_{N_k}^{(k)}$ are the connections of $\Gamma_k$.

\medskip

{\it Proof.} It is obvious that $F$ has a $1$-level chain from $x$ to $u$.

Suppose $F$ has a $k$-level chain $(A_1,A_2,\cdots, A_N)$ from $x$ to $u$ for an integer $k\geq 1$. In the case where $N=1$, one has $x,u\in A_1$. As is known, $A_1$ has a $1$-level chain from $x$ to $u$. Such a chain of $A_1$ is clearly a $(k+1)$-level chain of $F$ from $x$ to $u$. For the case $N>1$ let $x_1, x_2, \cdots, x_{N-1}$ be the ordered connections of the chain $(A_1, A_2, \cdots, A_{N})$. Then $A_1$ has a $1$-level chain from $x$ to $x_1$, $A_{j}$ has a $1$-level chain from $x_{j-1}$ to $x_{j}$ for each $j\in\{2,3,\cdots, N-1\}$, and $A_N$ has a $1$-level chain from $x_{N-1}$ to $u$. These $N$ chains arranged in the evident order yield a $(k+1)$-level chain of $F$ from $x$ to $u$.

By induction, for each $k\geq 1$, $F$ has a $k$-level chain from $x$ to $u$ with the additional requirements.

\medskip

{\bf Claim 2.} $F$ has an arc from $x$ and $u$.

\medskip

{\it Proof.} For each $k\geq 1$ let $\Gamma_k$ be a $k$-level chain of $F$ from $x$ to $u$ and let $x_1^{(k)}, x_2^{(k)}, \cdots, x_{N_k}^{(k)}$ be its ordered connections as in Claim 1. Let $$\gamma=\bigcap_{k=1}^\infty\Gamma_k.$$
Then $\gamma$ is a compact subset of $F$ containing the connections of $\Gamma_k$ for all $k$. We are going to show that $\gamma$ is an arc from $x$ to $u$.

Let $$X=\bigcup_{k=1}^\infty\{x_1^{(k)}, x_2^{(k)}, \cdots, x_{N_k}^{(k)}\}.$$ Then $X$ is dense in $\gamma$. On the other hand, $X$ is a well ordered set with an ordering induced by those of $\{x_1^{(k)}, x_2^{(k)}, \cdots, x_{N_k}^{(k)}\}$, $k\geq 1$. We may choose a dense subset $$Y=\bigcup_{k=1}^\infty\{y_1^{(k)}, y_2^{(k)}, \cdots, y_{N_k}^{(k)}\}$$ of the interval $[0,1]$ such that the map $h: X\to Y$  defined by $$h(x_j^{(k)})=y_j^{(k)},\, j=1,2,\cdots, N_k, \, k\geq 1$$ is an order-preserving homeomorphism. Now we easily see that $h$ can be extended to a homeomorphism of $\gamma$ onto $[0,1]$.

\medskip

{\bf Claim 3.}  Every good necklace has Property I.

\medskip

{\it Proof.} By the proof of Claim 2, each chain $\Gamma$ of $F$ from $x$ to $u$ has an arc from $x$ to $u$ and such an arc contains the connections of $\Gamma$.

Suppose $F$ is good. To check  Property I, we fix $k\in I$. Note that the connections of every $1$-level chain of $F_k$ are main nodes of $F_k$.

Case 1. Either $z_{k-1}$ or $z_k$ is a main node of $F_k$. Let $\Gamma$ be $1$-level chain of $F_k$ from $z_{k-1}$ to $z_k$. Since $F$ is good, $\Gamma$ contains at least two $1$-level copies of $F_k$, so its connections are nonempty. Let $\gamma$ be an arc of $\Gamma$ from $z_{k-1}$ to $z_k$. Then $\gamma$ contains at least two main nodes of $F_k$.

Case 2. Neither $z_{k-1}$ nor $z_k$ is a main node of $F_k$. In this case, there is a unique pair $l,j\in I$ such that $z_{k-1}\in F_{kl}$ and $z_{k}\in F_{kj}$. Since $F$ is good, we have $l\neq j$.

Subcase 1. $F_{kl}\cap F_{kj}=\emptyset$. Let $\Gamma$ be $1$-level chain of $F_k$ from $z_{k-1}$ to $z_k$. Then $\Gamma$ contains at least three $1$-level copies of $F_k$, so $\Gamma$ has at least two connections. Let $\gamma$ be an arc of $\Gamma$ from $z_{k-1}$ to $z_k$. Then $\gamma$ contains at least two main nodes of $F_k$.

Subcase 2. $F_{kl}\cap F_{kj}\neq\emptyset$. In this subcase, $F_{kl}\cap F_{kj}$ is a singleton whose unique point is denoted by $w$. Let
$$L=\bigcup_{i\in I, i\neq l, i\neq j} F_{ki}.$$
Then $L$ can be regarded as a $1$-level chain of $F_k$ from $a$ to $b$, where $\{a, b\}=\partial_FL$. And we may assume that $a\in F_{kl}$ and $b\in F_{kj}$. Clearly, $a, b$ are main nodes of $F_k$. Since $F$ is good, $F_{kl}$ has a $1$-level chain $A$ from $z_{k-1}$ to $a$ and $F_{kj}$ has a $1$-level chain $B$ from $b$ to $z_{k}$ such that $w\not\in A\cup B$. Thus $A\cap L=\{a\}$, $L\cap B=\{b\}$, and $A\cap B=\emptyset$. Let $\gamma_A$ be an arc of $A$ from $z_{k-1}$ to $a$, $\gamma_L$ be an arc of $L$ from $a$ to $b$, and $\gamma_B$ be an arc of $B$ from $b$ to $z_k$. Then $\gamma_A\cup\gamma_L\cup\gamma_B$ is an arc from $z_{k-1}$ to $z_k$ which contains at least two main nodes of $F_k$.

\medskip

{\bf Claim 4.} Every necklace with  Property I has no cut points.

\medskip

{\it Proof.} Suppose $F$ satisfies  Property I. For each $k\in I$ let $\gamma^{(k)}$ be an arc of $F_k$ from $z_{k-1}$ to $z_k$ which passes through at least two main nodes of $F_k$. Let $$\gamma=\bigcup_{k\in I}\gamma^{(k)}.$$
Then $\gamma$ is a circle of $F$ passing through all main nodes of $F$, where a circle means a subset homeomorphic with the geometric circle.
Let $$E=\bigcup_{\sigma\in I^*} f_{\sigma}(\gamma).$$
Then each $f_\sigma(\gamma)$ is a circle with
\begin{equation}\label{yam1}
f_\sigma(\{z_1, z_2,\cdots, z_n\}\subset f_\sigma(\gamma)\subset F_\sigma
\end{equation} and $E$ is dense in $F$. In addition, by the construction of $\gamma$, we have
$$\sharp(f_\sigma(\gamma)\cap f_{\sigma j}(\gamma))\geq 2$$  for each $\sigma\in I^*$ and each $j\in I$,
from which we easily infer that $E$ is connected and has no cut points. Now, by Lemma \ref{61}, we get that $F$ has no cut points.

This completes the proof of Theorem \ref{gothm}.

\begin{rem}{\rm Let $F$ be a necklace. By Claim 2, $F$ is path-connected. We further conclude that $F$ is locally path-connected, indeed, for each $z\in F$ and each integer $m\geq 1$ the set $$\bigcup_{A\in\mathcal{C}_m(F,z)}A$$ is a path-connected neighborhood of $z$, where $\mathcal{C}_m(F,z)$ is a family of $m$-level copies of $F$ defined by (\ref{gs}).}
\end{rem}

\section{The proof of Theorem \ref{mt1}}\label{sec4}

In this section we prove Theorem \ref{mt1}: Every stable necklace of bounded ramification has no cut points.

\medskip

Let $F$ be a necklace with a NIFS $\{f_1,f_2,\cdots,f_n\}$ on $\mathbb{R}^d$. Let
\begin{equation}\label{orb}
M_F=\bigcup_{\sigma\in I^*}\{f_{\sigma}(z_1),f_{\sigma}(z_2),\cdots, f_{\sigma}(z_n)\}.
\end{equation}
Then $x\in M_F$ if and only if $x$ is a main node of some copy of $F$. Also, we use the notations $\mathcal{C}_m(F,z)$ and $c_m(F,z)$ from (\ref{gs}) and (\ref{dgs}). As each copy $A$ of $F$ is a necklace with an induced NIFS, the notations $M_A$, $\mathcal{C}_m(A,z)$ and $c_m(A,z)$ are self-evident.

\begin{lem}\label{t1}
Suppose $F$ is stable. Then every point of $F\setminus M_F$ is not a cut point of $F$.
\end{lem}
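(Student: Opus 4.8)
\textbf{Proof plan for Lemma \ref{t1}.}

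The plan is to fix a point $x\in F\setminus M_F$ and produce, for every $\varepsilon>0$, a connected neighborhood of $x$ of diameter less than $\varepsilon$ whose complement in $F$ is connected; this will show $x$ is not a cut point. First I would locate $x$ inside a nested sequence of copies: since $x$ is not a main node of any copy, for each level $m$ the point $x$ lies in a \emph{unique} $m$-level copy $A_m$ with $x\in\mathrm{int}_F A_m$ (if $x$ lay in two $m$-level copies it would be a main node of the $(m-1)$-level copy containing them, contradicting $x\notin M_F$), and $A_0=F\supset A_1\supset A_2\supset\cdots$ with $\bigcap_m A_m=\{x\}$ and $\mathrm{diam}(A_m)\to 0$. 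The candidate small neighborhood of $x$ is $A_m$ for $m$ large, and the work is to show $F\setminus\mathrm{int}_F A_m$ — equivalently the union of all the other $m$-level copies — is connected, and then to pass from $F\setminus\mathrm{int}_F A_m$ to $F\setminus\{x\}$.

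Next I would reduce to a one-step statement. Write $A_{m-1}$ as a necklace with its induced NIFS $\{g_1,\dots,g_n\}$ and say $A_m$ is its $\ell$-th $1$-level subcopy $B_\ell:=g_\ell(A_{m-1})$, with $\partial_{A_{m-1}}B_\ell=\{w_{\ell-1},w_\ell\}$ the two consecutive main nodes of $A_{m-1}$. The key point is that $A_{m-1}\setminus\mathrm{int}_{A_{m-1}}B_\ell$ is a $1$-level chain of $A_{m-1}$ (the copies $B_{\ell+1},\dots,B_{\ell-1}$ read cyclically), hence connected, and it contains both $w_{\ell-1}$ and $w_\ell$; moreover $w_{\ell-1},w_\ell$ are main nodes of $F$ (being main nodes of the copy $A_{m-1}$), so they lie on the boundary of $A_{m-1}$ inside $F$ and in particular $A_{m-1}$ is glued to the rest of $F$ precisely at $w_{\ell-1}$ and $w_\ell$ (or at just one of them, or $A_{m-1}=F$ and they are glued to each other). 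Therefore $F\setminus\mathrm{int}_F B_\ell = \big(F\setminus\mathrm{int}_F A_{m-1}\big)\cup\big(A_{m-1}\setminus\mathrm{int}_{A_{m-1}}B_\ell\big)$, a union of two connected sets sharing the point(s) $w_{\ell-1}$ and/or $w_\ell$, hence connected. Iterating, or directly taking $m$ large and intersecting, $\bigcap_{m}\big(F\setminus\mathrm{int}_F A_m\big)=F\setminus\{x\}$ is an increasing union of connected sets, hence connected; this uses $\mathrm{diam}(A_m)\to 0$ so that $F\setminus\{x\}=\bigcup_m (F\setminus A_m)$ and each $F\setminus A_m\subset F\setminus\mathrm{int}_F A_m$.

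The main obstacle is the boundary bookkeeping: a priori $x$ might sit inside a copy $A_m$ that is attached to its complement at a point of $A_m$ other than the two prescribed main nodes $w_{\ell-1},w_\ell$, or the removal of $\mathrm{int}_F A_m$ might strand a piece of $F$. This is exactly where stability is used: the hypothesis $\sharp\{A_{m-1,j}: A_{m-1,j}\cap\partial_F A_{m-1}\neq\emptyset\}\ge 2$ guarantees that neither connecting main node $w_{\ell-1}$ nor $w_\ell$ of $A_{m-1}$ can lie in the single subcopy $B_\ell=A_m$ — otherwise one of them would be covered only by $B_\ell$ among the $1$-level subcopies meeting $\partial_F A_{m-1}$, but the condition forces the two boundary main nodes of $A_{m-1}$ to be distributed among at least two subcopies — so $\partial_F A_{m-1}\subset A_{m-1}\setminus\mathrm{int}_{A_{m-1}}B_\ell$, which is what makes the two connected pieces overlap at the gluing points. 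I would spell out the (few) cases according to whether $A_{m-1}=F$, whether $w_{\ell-1}=w_\ell$ is impossible here ($n\ge 3$), and whether $x$ is interior to $A_{m-1}$ at the previous level as well; in all cases the decomposition above goes through, completing the proof.
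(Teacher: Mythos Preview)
Your overall skeleton is exactly the paper's: write $F\setminus\{x\}$ as the increasing union of the sets $U_m:=F\setminus\mathrm{int}_F A_m$, and prove inductively that each $U_m$ is connected via the decomposition $U_m=U_{m-1}\cup L_{m-1}$ with $L_{m-1}:=A_{m-1}\setminus\mathrm{int}_{A_{m-1}}A_m$ a $1$-level chain of $A_{m-1}$. The only nontrivial point is showing $U_{m-1}\cap L_{m-1}\neq\emptyset$, and this is where your argument breaks.

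You conflate two different two-point sets. Your $\{w_{\ell-1},w_\ell\}=\partial_{A_{m-1}}B_\ell$ are main nodes of $A_{m-1}$; they lie in $B_\ell$ by definition and are typically \emph{interior} points of $A_{m-1}$ relative to $F$. The set $\partial_F A_{m-1}$, on the other hand, consists of main nodes of the parent $A_{m-2}$ --- a completely different pair of points. So the sentence ``$w_{\ell-1},w_\ell$ \ldots\ lie on the boundary of $A_{m-1}$ inside $F$'' is false, and your claimed overlap $U_{m-1}\cap L_{m-1}\supseteq\{w_{\ell-1},w_\ell\}$ is unjustified. Your stability paragraph inherits this confusion: you assert ``neither $w_{\ell-1}$ nor $w_\ell$ can lie in $B_\ell$'', which directly contradicts their definition, and your conclusion $\partial_F A_{m-1}\subset L_{m-1}$ is stronger than what stability yields (stability does not forbid $B_\ell$ from meeting $\partial_F A_{m-1}$; it only says some \emph{other} subcopy does too).

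The correct (and shorter) argument, which is what the paper gives, is this. Stability applied to the $1$-level copy $F_{i_{m-1}}$ and transported by $f_{i_1\cdots i_{m-2}}$ says that at least two of the $1$-level subcopies of $A_{m-1}$ meet $\partial_{A_{m-2}}A_{m-1}\subset\partial_F A_{m-1}$. At least one of these two is not $B_\ell=A_m$, hence lies in $L_{m-1}$; since $\partial_F A_{m-1}=U_{m-1}\cap A_{m-1}$, this gives $U_{m-1}\cap L_{m-1}\neq\emptyset$, and the induction closes. No case analysis on $w_{\ell-1},w_\ell$ is needed.
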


\begin{proof} Fix $z\in F\setminus M_F$. Then, by the definition of $M_F$, for each $m\geq 1$ there is a unique $m$-level copy containing $z$, so $c_m(F,z)=1$ and $z\in\mbox{int}_FV_m$, where $V_m$ denotes the unique member of $\mathcal{C}_m(F,z)$.
Let
\begin{equation}\label{gm0}
U_m=\bigcup_{A\in\,\mathcal{C}_m(F)\setminus\mathcal{C}_m(F,z)}A.
\end{equation}
Then $U_m\cup V_m=F$ and
\begin{equation}\label{kcb}
U_m\cap V_m=\partial_FU_m=\partial_FV_m.
\end{equation}
Furthermore $\{U_m\}_{m=1}^\infty$ is increasing with
\begin{equation}\label{F-x}
F\setminus \{z\}=\bigcup_{m=1}^\infty U_m.
\end{equation}
Let
\begin{equation}\label{L}
L_{m}=\bigcup_{B\in\mathcal{C}_1(V_m)\setminus\mathcal{C}_1(V_m,z)}B.
\end{equation}
Then $L_{m}$ is connected and
\begin{equation}\label{k+1j}
U_{m+1}=U_m\cup L_{m}.
\end{equation}

We claim that $U_m$ is connected for every $m\geq 1$. In fact, $U_1$ is a $1$-level chain of $F$, so it is connected. Assume that $U_m$ is connected for an integer $m\geq 1$. We are going to prove that $U_{m+1}$ is connected.

Since $F$ is stable, we may take two distinct copies $A, B\in\mathcal{C}_1(V_m)$ such that $A\cap\partial_FV_m\neq\emptyset$ and $B\cap\partial_FV_m\neq\emptyset$, so one has $A\cap U_m \neq\emptyset$ and $B\cap U_m \neq\emptyset$ by (\ref{kcb}). Without loss of generality assume $B\neq V_{m+1}$. Then $B\subset L_{m}$ by (\ref{L}). Therefore
\begin{equation}\label{dj}
U_{m}\cap L_{m}\neq\emptyset.
\end{equation}
Since $L_{m}$ is connected and $U_m$ has been assumed to be connected, we get from (\ref{k+1j}) and (\ref{dj}) that $U_{m+1}$ is connected.

By induction, $U_m$ is connected for every $m\geq 1$, which together with (\ref{F-x}) implies that $F\setminus\{z\}$ is connected, so $z$ is not a cut point. This completes the proof.
\end{proof}

\noindent{\bf The proof of Theorem \ref{mt1}.} Suppose $F$ is stable and of bounded ramification. We are going to prove that $F$ has no cut points. As Lemma \ref{t1} was proved, it suffices to prove that every point of $M_F$ is not a cut point of $F$.

Let $z\in M_F$ be given. Then there is a copy $E$ of $F$ such that $z$ is a main node of $E$. In what follows we assume that $E$ is the biggest copy of $F$ with this property. Then $z$ is a main node of $E$ and $z\in\mbox{int}_FE$. To show that $F\setminus\{z\}$ is connected, it suffices to prove that $E\setminus\{z\}$ is connected.

Since $F$ is of bounded ramification, $\{c_m(E,z)\}_{m=1}^\infty$ is bounded.
Thus we may take an integer $k\geq 1$ such that $$c_m(E,z)=c_k(E,z)$$ for all integers $m\geq k$, which in turn implies that $z$ is not a main node of any copy $A\in \mathcal{C}_k(E, z)$.

Therefore $z\in A\setminus M_A$ for each $A\in\mathcal{C}_k(E, z)$.
Since $A$ is stable by the assumption condition, we have by Lemma \ref{t1} that $A\setminus\{z\}$ is connected.

Now that $A\setminus\{z\}$ is connected for each $A\in\mathcal{C}_k(E, z)$, by which we easily see that $B\setminus\{z\}$ is connected for each $B\in\mathcal{C}_{k-1}(E, z)$. Step by step, we get that $E\setminus\{z\}$ is connected. This completes the proof.

\section{The proofs of Theorems \ref{mp1} and \ref{mp2}}\label{sec5}

\noindent{\bf Proof of Theorem \ref{mp1}.}
Let $F$ be a self-similar necklace with a NIFS $\{f_1,f_2,\cdots, f_n\}$ on $\mathbb{R}^d$ satisfying the OSC. We are going to show that $F$ is of boundary ramification.

Let $\mathcal{C}_m(F,z)$ and $c_m(z)$ be defined as (\ref{gs}) and (\ref{dgs}).
We have to show that $\{c_m(z_k)\}_{m=1}^\infty$ is bounded for each main node $z_k$ of $F$.

Given $z_k$ and $m$, let $\tau\in I^m$ be a word such that
$$F_\tau\in\mathcal{C}_m(F,z_k)\mbox{\, and \,}\mbox{diam}(F_{\tau})=\min_{A\in\mathcal{C}_m(F,z_k)}\mbox{diam}(A).$$
For each $A\in \mathcal{C}_m(F,z_k)$ we may take a copy $\widetilde{A}\in\mathcal{C}(F,z_k)$ such that
\begin{equation}\label{po}
\widetilde{A}\subseteq A\, \mbox{ and }\,c_*\mbox{diam}(F_\tau)<\mbox{diam}(\widetilde{ A})\leq \mbox{diam}(F_\tau),
\end{equation}
where $c_*=\min_{1\leq j\leq n}c_j$ and $c_1,c_2,\cdots,c_n\in(0,1)$ are respectively the similarity ratios of $f_1,f_2,\cdots,f_n$. Then we get $c_m(z_k)$ copies of comparable diameters, which are denoted as
\begin{equation}\label{cpis}
F_{\sigma_1}, F_{\sigma_2}, \cdots, F_{\sigma_{c_m(z_k)}}
\end{equation}
where $\sigma_1, \sigma_2, \cdots, \sigma_{c_m(z_k)}\in I^*$ are the corresponding words.  It follows from (\ref{po}) that
$$\bigcup_{j=1}^{c_m(z_k)}F_{\sigma_j}\subset B(z_k,\mbox{diam}(F_\tau)),$$
where $B(z_k,\mbox{diam}(F_\tau))$ is the closed ball of radius $\mbox{diam}(F_\tau)$ centred at $z_k$.
Since the NIFS satisfies the OSC, there is a nonempty bounded open set $V$ of $\mathbb{R}^d$ such that $f_1(V),f_2(V),\cdots,f_n(V)$ are pairwise disjoint subsets of $V$. Thus $$f_{\sigma_1}(V), f_{\sigma_2}(V), \cdots, f_{\sigma_{c_m(z_k)}}(V)$$ are pairwise disjoint.
On the other hand, as $V$ is bounded, we may take a constant $H\geq 1$ such that $V\subset B(z_k, H\mbox{diam}(F))$. Then
\begin{equation}\label{inc}
\bigcup_{j=1}^{c_m(z_k)}f_{\sigma_j}(V)\subset B(z_k,H\mbox{diam}(F_\tau)).
\end{equation}
By comparing volumes we get from (\ref{inc}) that
\begin{equation}\label{RB}
c_m(z_k)\min_{1\leq j\leq c_m(z_k)}\mbox{Vol}(f_{\sigma_j}(V))\leq w_d (H\mbox{diam}(F_\tau))^d,
\end{equation}
where $w_d$ denotes the volume of the $d$-dimensional unit ball. Since the NIFS consists of similitudes of $\mathbb{R}^d$, one has by (\ref{po})
\begin{equation}\label{Sca}
\frac{\mbox{Vol}(f_{\sigma_j}(V))}{\mbox{Vol}(f_{\tau}(V))}
=\left(\frac{\mbox{diam}(f_{\sigma_j}(V))}{\mbox{diam}(f_{\tau}(V))}\right)^d=\left(\frac{\mbox{diam}(F_{\sigma_j})}{\mbox{diam}(F_{\tau})}\right)^d\geq c_*^d
\end{equation}
for every $j\in\{1,2,\cdots, c_m(z_k)\}$,
which together with (\ref{RB}) yields
\begin{equation}\label{RBB}
c_m(z_k)c_*^d\mbox{Vol}(f_\tau(V))\leq w_d (H\mbox{diam}(F_\tau))^d.
\end{equation}
Therefore $$c_m(z_k)\leq \frac{w_d (H\mbox{diam}(F_\tau))^d}{c_*^d\mbox{Vol}(f_\tau(V))}=\frac{w_d(H\mbox{diam}(F))^d}{c_*^d\mbox{Vol}(V)}.$$
Thus the sequence $\{c_m(z_k)\}_{m=1}^\infty$ is bounded. This proves that $F$ is of bounded ramification. Now Theorem \ref{mp1} follows by Theorem \ref{mt1}.

\medskip

\noindent{\bf Proof of Theorem \ref{mp2}.} Let $F$ be a planar necklace with no cut points. We are going to show that $F$ satisfies the OSC.

By the proof of Theorem \ref{gothm}, $F$ has a circle. Thus $\mathbb{R}^2\setminus F$ has infinitely many bounded components by the definition of $F$ and Jordan's curve theorem. Let $U$ be a fixed bounded component of $\mathbb{R}^2\setminus F$.
Then
$$\partial U\subset F\mbox{ and } U\cap F=\emptyset.$$
Let $\{f_1,f_2,\cdots, f_n\}$ be a NIFS of $F$. Since $f_k$'s have been assumed to be contracting homeomorphisms of $\mathbb{R}^2$, we have that, for every $\sigma\in I^*$, the image $f_\sigma(U)$ of $U$ under $f_\sigma$ is a bounded component of $\mathbb{R}^2\setminus F_\sigma$ with
\begin{equation}\label{am0}
\partial (f_\sigma (U))\subset F_\sigma\mbox{ and } f_\sigma(U)\cap F_\sigma=\emptyset.
\end{equation}

We are going to show that  $f_\sigma(U)$ is a bounded component of $\mathbb{R}^2\setminus F$ for each word $\sigma\in I^*$.
First, we have by (\ref{am0}) and the definition of $F$
$$\bigcup_{j=3}^{n-1}F_j\subset f_1(U) \mbox{ or }\left(\bigcup_{j=3}^{n-1}F_j\right)\cap f_1(U)=\emptyset.$$
Moreover, since $\mbox{diam}(F)>\mbox{diam}(F_1)$, we have $$F_2\setminus\overline{f_1(U)}\neq\emptyset \mbox{ or }F_n\setminus\overline{f_1(U)}\neq\emptyset.$$

Next we show that $f_1(U)\cap F=\emptyset$ under the assumption that $F$ has no cut points. In fact, if not, we encounter several different cases
and, in each case, there is a digit $k\in\{2,n\}$ such that $$f_1(U)\cap F_k\neq\emptyset \mbox{ and }F_k\setminus\overline{f_1(U)}\neq\emptyset,$$ which implies that either $z_1$ or $z_n$ is a cut point of $F$, contradicting the assumption on $F$.

Similarly, for each $k\in I$ we have $f_k(U)\cap F=\emptyset$, which together with (\ref{am0}) implies that $f_k(U)$ is a bounded component of $\mathbb{R}^2\setminus F$. For each word $\sigma\in I^*$, arguing as above step by step, we get
$f_\sigma(U)\cap F=\emptyset$, so $f_\sigma(U)$ is a bounded component of $\mathbb{R}^2\setminus F$.

Let $m,k\in I$ be distinct and let $\sigma,\tau\in I^*$. By the above conclusions, $f_m(f_\sigma(U))$ and $f_k(f_\tau(U))$ are two distinct components of $\mathbb{R}^2\setminus F$, so
\begin{equation}\label{os1}
f_m(f_\sigma(U))\cap f_k(f_\tau(U))=\emptyset.
\end{equation}

Now let
$$
V=\bigcup_{\sigma\in I^*}f_\sigma(U).
$$
It is obvious that $f_k(V)\subset V$ for every $k\in I$. On the other hand, we see by (\ref{os1}) that $f_m(V)$ and $f_k(V)$ are disjoint for distinct $m,k\in I$. This proves that, with the open set $V$, the NIFS satisfies the OSC. The proof is completed.

\section{Some further questions}\label{sec6}

\subsection{The OSC problem}

We just proved that every planar necklace with no cut points satisfies the OSC. However, the proof is invalid for necklaces in $\mathbb{R}^d$, $d\geq 3$. Actually, we easily check that every necklace is of topological dimension $1$.  Therefore, for a necklace $F$ in $\mathbb{R}^d$, $d\geq 3$, we see that $\mathbb{R}^d\setminus F$ does not have any bounded components. We do not know if every necklace with no cut points satisfies the OSC in the higher dimensional case. It is open even for self-similar necklaces.

\subsection{Conformal dimension of self-similar necklaces} Tyson and Wu \cite{TW} proved that the two necklaces in Figure 1 are of conformal dimension $1$. We thus ask: Can one develop a unified method to prove that a big class of self-similar necklaces are of conformal dimension $1$?

\subsection{Topological rigidity of necklaces}

Roughly speaking, a topological space is rigid, if the group of its (topological) automorphisms is small.  A further study on good necklaces and their topological rigidity can be found in \cite{Wen}.

\bigskip

\noindent{\bf Acknowledgement.} The author thanks Professors Fang Fuquan and Shigeki Akiyama for their suggestions and encouragement.

\end{document}